\documentclass[a4paper,11pt,reqno]{amsart}

\usepackage{amsmath}
\usepackage{amsfonts}
\usepackage{amssymb}
\usepackage{amsthm}
\usepackage[shortlabels]{enumitem}
\usepackage{graphicx}
\usepackage{color}
\usepackage{dsfont}
\usepackage[latin1]{inputenc}
\usepackage{MnSymbol}
\usepackage{enumitem}
\usepackage{extpfeil}
\usepackage{mathtools}
\usepackage{url}
\usepackage[margin=1.2in]{geometry}
\usepackage{fancyhdr}
\usepackage[all,cmtip]{xy}

\fancyhf{}
\fancyhead[C]{\thepage}
\pagestyle{fancy}
\headheight = 13pt

\numberwithin{equation}{section}

\title{On the Dehn functions of K\"ahler groups}

\author{Claudio Llosa Isenrich}
\address{Laboratoire de Math\'ematiques d'Orsay, Univ. Paris-Sud, CNRS, Universit\'e Paris-Saclay, 91405 Orsay, France}
\email{claudio.llosa-isenrich@math.u-psud.fr}
\author{Romain Tessera}
\address{Institut de Math\'ematiques de Jussieu-PRG, Universit\'e Paris-Diderot, Case 7012, 75205 Paris Cedex 13, France}
\email{romatessera@gmail.com}

\thanks{The first author was supported by a public grant as part of the FMJH. The second author was supported by the grant ANR-14-CE25-0004 ``GAMME''}
\keywords{K\"ahler groups, Dehn functions}
\subjclass[2010]{32J27, 20F65}

\begin{document}

\newcommand{\AAA}{{\mathds A}}
\newcommand{\CC}{{\mathds C}}
\newcommand{\PP}{{\mathbf P}}
\newcommand{\QQ}{{\mathds Q}}
\newcommand{\RR}{{\mathds R}}
\newcommand{\NN}{{\mathds N}}
\newcommand{\ZZ}{{\mathds Z}}
\newcommand{\del}{{\partial}}
\newcommand{\one}{{\mathds {1}}}
\newcommand{\ord}{{\mathcal {O}}}
\newcommand{\ii}{{\mathds {i}}}
\newcommand{\vol}{{\mathrm {vol}}}
\newcommand{\eps}{{\epsilon}}
\def\Mod{{\rm{Mod}}}
\def\C{{\mathds C}}
\def\D{\rm D}
\def\S{\Sigma}
\def\F{{\mathds F}}
\def\FF{\mathcal F}
\def\aut{{\rm{Aut}}}
\def\inn{{\rm{Inn}}}
\def\out{{\rm{Out}}}
\def\isom{{\rm{Isom}}}
\def\mcg{{\rm{MCG}}}
\def\ker{{\rm{ker}}}
\def\im{{\rm{im}}}
\def\dim{{\rm{dim}}}
\def\G{\Gamma}
\def\a{\alpha}
\def\g{\gamma}
\def\L{\Lambda}
\def\Z{{\mathds{Z}}}
\def\H{{\mathds{H}}}
\def\nn{{\bf N}}
\newcommand{\mm}{{\underline{m}}}

\theoremstyle{plain}
\newtheorem{theorem}{Theorem}[section]
\newtheorem{acknowledgement}[theorem]{Acknowledgement}
\newtheorem{claim}[theorem]{Claim}
\newtheorem{conjecture}[theorem]{Conjecture}
\newtheorem{corollary}[theorem]{Corollary}
\newtheorem{exercise}[theorem]{Exercise}
\newtheorem{lemma}[theorem]{Lemma}
\newtheorem{proposition}[theorem]{Proposition}
\newtheorem{question}{Question}
\newtheorem*{question*}{Question}
\newtheorem{addendum}[theorem]{Addendum}

\theoremstyle{definition}
\newtheorem{remark}[theorem]{Remark}
\newtheorem*{acknowledgements*}{Acknowledgements}
\newtheorem{example}[theorem]{Example}
\newtheorem{definition}[theorem]{Definition}
\newtheorem*{notation*}{Notation}
\newtheorem*{convention*}{Convention}

\renewcommand{\proofname}{Proof}

\begin{abstract}
We address the problem of which functions can arise as Dehn functions of K\"ahler groups. We explain why there are examples of K\"ahler groups with linear, quadratic, and  exponential Dehn function. We then proceed to show that there is an example of a K\"ahler group which has Dehn function bounded below by a cubic function and above by $n^6$. As a consequence we obtain that for a compact K\"ahler manifold having non-positive holomorphic bisectional curvature does not imply having quadratic Dehn function.
\end{abstract}

\maketitle

\section{Introduction}

A \textit{K\"ahler group} is a group which can be realized as fundamental group of a compact K\"ahler manifold. K\"ahler groups form an intriguing class of groups. A fundamental problem in the field is Serre's question of ``which" finitely presented groups are K\"ahler. While on one side there is a variety of constraints on K\"ahler groups, many of them originating in Hodge theory and, more generally, the theory of harmonic maps on K\"ahler manifolds, examples have been constructed that show that the class is far from trivial.  Filling the space between examples and constraints turns out to be a very hard problem. This is at least in part due to the fact that the range of known concrete examples and construction techniques are limited. For general background on K\"ahler groups see \cite{ABCKT-95} (and also \cite{Bur-10, BisMj-17} for more recent results).

Known constructions have shown that K\"ahler groups can present the following  group theoretic properties: they can 
\begin{itemize}
 \item be non-residually finite \cite{Tol-93} (see also \cite{CatKol-92}); 
 \item be nilpotent of class 2 \cite{Cam-95, SomVdV-86};
 \item admit a classifying space with finite $k$-skeleton, but no classifying space with finitely many $k+1$-cells \cite{DimPapSuc-09-II} (see also \cite{BisMjPan-14, Llo-16-II, BriLlo-16}); and
 \item be non-coherent \cite{Kap-13} (also \cite{Py-16, FriVid-17}).
\end{itemize}

On the other side strong constraints on K\"ahler groups exclude many groups from being K\"ahler. One of the simplest constraints is that their first Betti number must be even, meaning that for example free abelian groups of odd rank and free groups can not be K\"ahler. Other constraints include that K\"ahler groups are one-ended \cite{Gro-89}, are virtually nilpotent if they are virtually solvable \cite{Del-08} and have quadratically presented Malcev algebra \cite{DGMS-75}.

A fundamental object of study in asymptotic group theory is the Dehn function of a finitely presented group. This relates to other questions in Group theory, such as solvability of the word problem. In this work we address the following question.
\begin{question}
 Which functions can be realised as Dehn functions of K\"ahler groups?
\label{qnDehnKG}
\end{question}

 A straightforward search combining known examples of K\"ahler groups with classical results on Dehn functions shows that they include groups with linear, quadratic, and exponential Dehn function (see Section \ref{secLinQuadExp}). Since all three functions  occur as Dehn functions of many lattices in semi-simple Lie groups \cite{LeuPit-96, Dru-04, LeuYou-17}, this is may not be too surprising, considering that many of the known examples of K\"ahler groups arise from such lattices (see e.g. \cite{Tol-90}). However, these three functions only cover a small fraction of the functions that can be obtained as Dehn functions of finitely presented groups and they play a very special role among them: hyperbolic groups can be characterised by the property that their Dehn function is linear \cite{Gro-87} and there is no non-hyperbolic group with subquadratic Dehn function \cite{Gro-87,Bow-95,Ols-91,CooDelPap-06}. In contrast, the set of $\alpha\geq 2$ such that there is a group with Dehn function $n^{\alpha}$ is dense in $\left[2,\infty\right)$ \cite{BraBri-00}. 

Hence, it is natural to ask if K\"ahler groups can attain any Dehn functions other than linear, quadratic and exponential, and the main part of our work will be dedicated to proving that the answer to this question is positive. This is achieved by showing that an example of a K\"ahler subgroup of a direct product of three surface groups, which was constructed by Dimca, Papadima and Suciu \cite{DimPapSuc-09-II}, has Dehn function bounded below by a cubic function and above by $n^6$. 

\begin{theorem}
 There is a K\"ahler group $G$ with the following properties:
 \begin{enumerate}
 \item $G$ has Dehn function $\delta_G$ with $n^3\preccurlyeq \delta_G(n) \preccurlyeq n^6$;
 \item $G$ is \textit{not coherent};
 \item $G$ is of type $\mathcal{F}_2$ and not of type $\mathcal{F}_3$.
 \end{enumerate}
 \label{thmMainTheorem}
\end{theorem}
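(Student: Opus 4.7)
The proof strategy is to analyse the specific K\"ahler group $G$ constructed by Dimca--Papadima--Suciu \cite{DimPapSuc-09-II}, realised as the kernel of a surjective homomorphism
\[
\phi = \phi_1 + \phi_2 + \phi_3 \,:\, H := \pi_1(\S_{g_1}) \times \pi_1(\S_{g_2}) \times \pi_1(\S_{g_3}) \,\longrightarrow\, \ZZ^2,
\]
where each $\phi_i$ is induced by a holomorphic surjection $\S_{g_i}\to E$ onto a common elliptic curve $E$. Then $G$ is K\"ahler because it is the fundamental group of a smooth fibre of the induced holomorphic map $\S_{g_1}\times\S_{g_2}\times\S_{g_3}\to E$; non-coherence and being of type $\FF_2$ but not $\FF_3$ are precisely the statements proved for this example in \cite{DimPapSuc-09-II} via the Bieri--Neumann--Strebel--Renz invariants of the direct product $H$. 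So (2) and (3) come for free and the substantive content is the pair of Dehn function bounds in (1).

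For the upper bound $\delta_G(n)\preccurlyeq n^6$ I would run a two-step filling scheme. Given a null-homotopic word $w$ of length $n$ in $G$, first fill $w$ inside $H$: since $H$ is a product of three non-elementary surface groups, $\delta_H(n)\simeq n^2$, and so $w$ bounds a van Kampen diagram $\Delta$ over $H$ of area $\lesssim n^2$ and diameter $\lesssim n$. The diagram $\Delta$ is not a van Kampen diagram over $G$ because its interior vertices can lift to elements of $H\setminus G$. To convert $\Delta$ into a $G$-diagram, I would replace each $2$-cell of $\Delta$ by a sub-diagram realising the same boundary word using only relators of a chosen finite presentation of $G$. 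The cost of this replacement at a cell sitting over a vertex of $H$-height $h\in\ZZ^2$ with $|h|\lesssim n$ is bounded by $|h|^2\lesssim n^2$, reflecting the distortion one incurs when conjugating an $H$-relator into $G$. Since there are at most $n^2$ cells, the total area is at most $n^2\cdot n^2\cdot n^2 = n^6$. The main technical point here is to arrange that the replacement sub-diagrams glue consistently along shared edges.

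For the lower bound $n^3\preccurlyeq \delta_G(n)$ I would exhibit an explicit sequence $(w_n)_{n\in\NN}$ of null-homotopic words in $G$ with $|w_n|\sim n$ whose area inside $G$ is $\gtrsim n^3$. A natural family comes from commutator-like expressions built out of words $u_1 u_2 u_3$ lying in $G$ only because of a cancellation of the $\phi_i$-images across all three surface factors: any $G$-filling must then spread across the three factors simultaneously, paying an extra factor of $n$ on top of the quadratic area of an $H$-filling. To make this rigorous I would pair candidate van Kampen diagrams over $G$ against a $2$-cocycle constructed as a bounded primitive of a cup product of length cocycles on the three surface factors, in the spirit of Gersten's homological area lower bounds; the estimate then reduces to showing that this cocycle takes value $\gtrsim n^3$ on $w_n$ while remaining uniformly bounded on any single $2$-cell of a $G$-diagram.

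The most delicate step is the lower bound: constructing the pairing cocycle and checking both that it is bounded on boundaries and that it takes value $\gtrsim n^3$ on the $w_n$ genuinely uses that $G$ is three-dimensional inside $H$, whereas quadratic lower bounds would already be visible in a product of two surface groups. The upper bound is essentially an organised accounting of the cost of pushing an $H$-filling down into $G$ and should be routine once the finite presentation of $G$ and the relevant distortion estimates are fixed.
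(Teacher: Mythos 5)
Your identification of the group (the Dimca--Papadima--Suciu kernel of $\phi_1+\phi_2+\phi_3:\Gamma_{g_1}\times\Gamma_{g_2}\times\Gamma_{g_3}\to\ZZ^2$) is correct, and properties (2) and (3) do indeed come directly from \cite{DimPapSuc-09-II}. Your upper bound is in the right spirit --- the paper obtains $n^6$ by quoting Dison's area--radius theorem, which is the rigorous form of your ``push an $H$-filling down into $\ker\phi$'' scheme --- but note that your own accounting does not close: $n^2$ cells each replaced at cost $n^2$ gives $n^4$, and the extra factor of $n^2$ you insert to reach $n^6$ is unexplained. In Dison's statement the exponent is $\rho^{2m}\alpha$ with $m=\operatorname{rk}\ZZ^2=2$, i.e.\ $n^4\cdot n^2$; the radius enters to the power $2m$, not to the first power per cell, and this is exactly the ``gluing consistently along shared edges'' issue you defer.

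The genuine gap is the lower bound, which is the substance of the theorem. You propose to pair van Kampen diagrams against a $2$-cocycle obtained as a bounded primitive of a cup product of length cocycles, but you neither construct this cocycle nor give any reason why it would be uniformly bounded on $2$-cells of a $G$-diagram while evaluating to $\gtrsim n^3$ on your test words; as written this is a placeholder for the entire difficulty. The paper's mechanism is quite different and has two concrete ingredients, neither of which appears in your sketch. First, a distortion estimate in the two-factor fibre product $K_2\le\Gamma_2^{(1)}\times\Gamma_2^{(2)}$: the elements $h_m=([(a_1^{(1)})^m,(b_2^{(1)})^m],1)$ have length $\asymp m$ in the ambient product but length $\succcurlyeq m^2$ in $K_2$, because any word for $h_m$ in the fibre-product generators $\mathcal{X}_\Delta\cup\mathcal{R}_1$ encodes a filling of the null-homotopic word $[a^m,b^m]$ in $\ZZ^2$, whose area is $\asymp m^2$ (a quantitative version of Mihailova's construction). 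Second, $K_3$ splits as an amalgam $(K_2\rtimes F_2)\ast_{K_2\times\ZZ}(K_2\rtimes F_2)$ over the preimage of $\langle[a_1^{(3)},b_1^{(3)}]\rangle$, and Dison's centraliser estimate in amalgams gives $\mathrm{Area}([w_m,(u_1u_2)^m])\ge C\,m\cdot\mathrm{dist}_{H}(1,h_m)\succcurlyeq m\cdot m^2$ for words of length $\asymp m$. Your intuition that the words should ``lie in $G$ only because of a cancellation across the factors'' points in the right direction, but without the distortion estimate for $K_2$ (and a device such as the amalgam decomposition to convert distortion of the edge group into area) there is no route from that intuition to the cubic bound; in particular the relevant phenomenon is already present in the \emph{two}-factor fibre product $K_2$, contrary to your remark that a product of two surface groups only sees quadratic behaviour.
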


It turns out that the group $G$ of Theorem \ref{thmMainTheorem} can be realised as fundamental group of a smooth projective variety $X$ with Stein universal cover $\widetilde{X}$ \cite{DimPapSuc-09-II}. To obtain Theorem \ref{thmMainTheorem} we generalise work by Dison on the Dehn function of subgroups of direct products of free groups \cite{Dis-09}. While the upper bound is a straightforward consequence of his work, the lower bound requires some new ideas. 

Our key estimate to obtain the lower bound is Proposition \ref{prop:distorsion/Dehn}, which gives a lower bound on the distorsion of certain elements in a fibre product in terms of the Dehn function of the common quotient. As pointed out to us by the referee, this can be interpreted as a quantitative version of a result of Mihailova \cite{Mih} proving that there is a subgroup of a direct product of two free groups with unsolvable generalized word problem. We refer the reader to \cite[Section 4]{Mil} for Mihailova's result and several related unsolvability results for subgroups of direct products of two free groups which can be proved using similar methods. In this context we also want to point out that the finite generation and finite presentability of fibre products in terms of the finiteness properties of the groups involved in their construction has been studied in detail. In particular, there are explicit algorithms that, given some explicit data and conditions on two homomorphisms $\phi_i: G_i \to Q$ ($i=1,2$), provide a finite presentation for their fibre product (see \cite{BriHowMilSho-13} and also \cite{BauBriMilSho-00,Dis-08}).

Recall that a closed Riemannian manifold with non-positive sectional curvature admits a quadratic isoperimetric function.
A natural notion of curvature in the context of K\"ahler manifolds is that of holomorphic bisectional curvature. In contrast with the Riemannian setting, we deduce from Theorem \ref{thmMainTheorem} the following result.

\begin{corollary}
 There is a compact K\"ahler manifold with non-positive holomorphic bisectional curvature which does not admit a quadratic isoperimetric function. 
 \label{corHolCurv}
\end{corollary}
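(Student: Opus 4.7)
The plan is to exhibit a concrete compact Kähler manifold $X$ whose fundamental group is the group $G$ furnished by Theorem~\ref{thmMainTheorem}, and to endow $X$ with a Kähler metric of non-positive holomorphic bisectional curvature by realizing $X$ as a Kähler submanifold of a product of hyperbolic compact Riemann surfaces. The Dimca--Papadima--Suciu construction produces $G$ as the fundamental group of a smooth projective subvariety $X \subset \S_1 \times \S_2 \times \S_3$, where each $\S_i$ is a compact Riemann surface of genus $\geq 2$ (obtained as a smooth fiber of a holomorphic map to an elliptic curve). This submanifold structure is precisely what I will exploit.

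Equip each $\S_i$ with its hyperbolic metric of constant holomorphic sectional curvature $-1$. The product metric on $\S_1 \times \S_2 \times \S_3$ is Kähler, and its holomorphic bisectional curvature $R(U,\bar U, V, \bar V)/(|U|^2|V|^2)$ is negative when $U,V$ lie in the same factor and vanishes when they lie in different factors; in particular, it is non-positive. The induced Kähler metric on $X$ then has non-positive holomorphic bisectional curvature as well: the Gauss equation for a Kähler submanifold $Y \subset M$ reads
\[
R^Y(U,\bar U, V, \bar V) \;=\; R^M(U,\bar U, V, \bar V) \;-\; \bigl|II(U,V)\bigr|^2 \;-\; \bigl|II(U,\bar V)\bigr|^2,
\]
so holomorphic bisectional curvature can only decrease under restriction to a Kähler submanifold.

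It remains to observe that $X$ does not admit a quadratic isoperimetric function. Since $X$ is compact with $\pi_1(X) = G$, the Filling function of its universal cover is equivalent to the Dehn function $\delta_G$ (this is the standard correspondence between isoperimetric functions of compact Riemannian manifolds and Dehn functions of their fundamental groups). By Theorem~\ref{thmMainTheorem}(1) we have $\delta_G(n) \succcurlyeq n^3$, so no quadratic isoperimetric inequality can hold on $X$. Combining the two conclusions gives the corollary.

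The only delicate point is the first step—the inheritance of non-positive holomorphic bisectional curvature from the ambient product—but this follows at once from the standard Gauss formula above, so no real obstacle arises. Essentially all the work has already been done in Theorem~\ref{thmMainTheorem}; the corollary is a geometric packaging of that result in the guise of a curvature statement.
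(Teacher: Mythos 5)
Your proposal is correct and follows essentially the same route as the paper: realize the group as $\pi_1$ of the smooth generic fibre of the Dimca--Papadima--Suciu map $S_2^{(1)}\times S_2^{(2)}\times S_2^{(3)}\to E$, note that the product of hyperbolic surfaces has non-positive holomorphic bisectional curvature, invoke the monotonicity of this curvature under passage to complex submanifolds (the paper cites Goldberg--Kobayashi where you write out the Gauss equation), and conclude from the cubic lower bound on the Dehn function. No substantive difference.
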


\noindent \textbf{Structure.} In Section \ref{secDehnFct} we summarize results on Dehn functions that we shall need. In Section \ref{secLinQuadExp} we provide examples of K\"ahler groups with linear, quadratic and exponential Dehn function. In Section \ref{secDistFibProd} we give a lower bound on the distortion of certain elements in fibre products of hyperbolic groups. We apply this bound in Section \ref{secLowerBound} to show that there is a K\"ahler group with Dehn function bounded below by a cubic function. In Section \ref{secUpperBound} we give an upper bound on the Dehn function of this group allowing us to prove Theorem \ref{thmMainTheorem}. We finish by listing some open questions arising from our work in Section \ref{secQuestions}.

\noindent \textbf{Acknowledgments.} We would like to thank Pierre Pansu for helpful comments and suggestions and the referee for their valuable remarks.

\section{Dehn functions}
\label{secDehnFct}

Let $G$ be a finitely presented group and let $G\cong \left\langle X\mid R\right\rangle$ be a finite presentation for $G$. A \textit{word $w(X)$} of \textit{length} $l(w(X))=m$ in the alphabet $X$ is an expression of the form $w(X) = x_1^{\pm 1}\dots x_m^{\pm 1}$ with $x_i\in X$ for $1\leq i \leq m$. We call a word $w(X)$ \textit{null-homotopic in $G$} if it represents the trivial element in $G$. Every null-homotopic word is freely equal to a word of the form $\prod _{j=1}^k u_j(X) r_j^{\pm 1} (u_j(X))^{-1}$ for some words $u_i(X)$ and elements $r_i\in R$. The area of a null-homotopic word $w(X)$ is
\[
\mathrm{Area}(w(X))= \mathrm{min} \left\{k \mid w(X) \stackrel{F(X)}{=} \prod _{j=1}^k u_j(X) r_j^{\pm 1} (u_j(X))^{-1}\right\}
\]

For non-decreasing functions $f,g: \NN \to \RR_{\geq 0}$ (or $:\RR_{\geq 0} \to \RR_{\geq 0}$) we say that $f$ is \textit{asymptotically bounded} by $g$ if there are constants $C_1, C_2, C_3 \geq 1$ such that $f(n)\leq C_1 g(C_2n)+C_3$ for all $n\in \NN$ (or $n\in \RR$). We write $f\preccurlyeq g$ if $f$ is asymptotically bounded by $g$. We further say that $f$ is \textit{asymptotically equal (or asymptotically equivalent)} to $g$ and write $f\asymp g$ if $f\preccurlyeq g \preccurlyeq f$.

For an element $g\in G$ we denote by $|g|_G= \mathrm{dist}_{Cay(G,X)}(1,g)$ its distance from the origin in the Cayley graph $Cay(G,X)$ with respect to the generating set $X$ of $G$. 

The \textit{Dehn function} of $G$ is the function
\[	
 \delta_G(n)= \mathrm{max} \left\{ \mathrm{Area}(w(X)) \left\mid w(X) ~\mbox{ null-homotopic with}~ l(w(X))\leq n \right. \right\}.
\]
We say that a function $f: \NN \to \RR_{>0}$ is an \textit{isoperimetric function} for $G$ if $\delta_G(n)\preccurlyeq f(n)$. 

For a subgroup $N\leq G$ of a finitely generated group $G$ and generating sets $X$ of $G$ and $Y$ of $N$, the \textit{distortion} of $N$ in $G$ is the function $\Delta_N^G(n)= \mathrm{max} \left\{|g|_H \mid g \in N, |g|_G\leq n \right\}$.

A priori the definitions of $|\cdot|_{G}$ and $\delta_G$, and $\Delta_N^G$ depend on a choice of a finite presentation for $G$ and a finite generating set for $N\leq G$. However, up to asymptotical equivalence, they are independent of these choices and hence it makes sense to speak of them as properties of a finitely presented group rather than of a finite presentation.

We want to summarize a few important properties of Dehn functions which we will require.

\begin{theorem}
Let $G$ be a finitely presented group and $\delta_G$ be its Dehn function. Then the following hold:
\begin{enumerate}
\item $\delta_G$ is linear if and only if $G$ is hyperbolic;
\item if $\delta_G$ is subquadratic (i.e. $\delta_G(n)\prec n^2$) then it is linear;
\item if $G=\pi_1 X$ for $X$ a closed non-positively curved Riemann manifold or, more generally, if $G$ is CAT(0), then the Dehn function of $G$ is at most quadratic. In particular, if $G$ is not hyperbolic then $\delta_G(n)\asymp n^2$;
\item if $G=G_1 \times G_2$ is a direct product of two infinite groups then $$\delta_G(n)\asymp \mathrm{max}\left\{n^2, \delta_{G_1}(n),\delta_{G_2}(n)\right\}.$$
\end{enumerate}
\label{thmDehnFunctions}
\end{theorem}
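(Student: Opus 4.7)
All four assertions are by now classical, so the plan is to indicate the key idea in each and point to the standard references rather than to reconstruct the arguments from scratch.

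Part (1): the ``if'' direction uses Dehn's algorithm --- in a $\delta$-hyperbolic Cayley graph, thinness of geodesic triangles implies that every null-homotopic word of length $n$ contains a long subword of some defining relator, and iterated reduction produces a van Kampen diagram of linear area. The converse, that a linear isoperimetric inequality forces hyperbolicity, is Gromov's original observation \cite{Gro-87}, with self-contained proofs by Bowditch, Ol'shanskii, and Coornaert--Delzant--Papadopoulos \cite{Bow-95,Ols-91,CooDelPap-06}. Part (2) is the gap theorem from the same sources: the combinatorial heart is a self-improvement argument on van Kampen diagrams showing that any strictly subquadratic isoperimetric function can be boosted to a linear one.

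For (3), the plan is to pass to a CAT(0) model space $\widetilde{X}$ on which $G$ acts geometrically (which exists by hypothesis, and in the Riemannian case is the universal cover with its lifted metric). Given a combinatorial loop $c$ of length $n$ in the $1$-skeleton, one fills $c$ by the geodesic cone $D$ from a basepoint of $c$; CAT(0) convexity of the distance function yields the Riemannian bound $\mathrm{area}(D)\leq C n^2$, and a standard bounded-geometry argument converts $D$ into a combinatorial van Kampen filling of comparable area. The ``in particular'' clause then follows by combining this with (1) and (2): if $G$ is non-hyperbolic then (1) rules out a linear Dehn function, and (2) forces $\delta_G(n)\asymp n^2$.

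For (4), the lower bound $\delta_{G_i}\preccurlyeq \delta_G$ is immediate from the fact that $G_i\hookrightarrow G$ is a retract via the projection to the factor, so any van Kampen filling in $G$ projects to one in $G_i$ of no larger area. The quadratic lower bound comes from the commutator trick: choosing infinite-order $a\in G_1$ and $b\in G_2$, the word $[a^n,b^n]$ is null-homotopic of length $4n$, and a diagrammatic counting argument, tracking ``horizontal'' and ``vertical'' edges in any van Kampen filling, gives area $\succcurlyeq n^2$. The matching upper bound, due to Bridson, proceeds by an ``unshuffling'' of any null-homotopic word $w$ of length $n$ in $G_1\times G_2$: at a cost of at most $n^2$ commutator relators one rewrites $w$ as a concatenation $w_1 w_2$ with $w_i$ null-homotopic in $G_i$, and then fills each $w_i$ at cost $\delta_{G_i}(n)$. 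The delicate step is the combinatorial bookkeeping of the unshuffling; this is the one point where I would simply invoke Bridson's original argument rather than reconstruct it.
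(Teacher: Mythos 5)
Your plan is correct and takes essentially the same route as the paper, which simply cites the classical references for all four parts (Gromov, Bowditch, Ol'shanskii, Coornaert--Delzant--Papadopoulos for (1)--(2), Bridson--Haefliger for (3), and Brick for (4)); your sketches of the underlying arguments are accurate. Two minor points: the direct-product statement in (4) is due to Brick rather than Bridson, and in the quadratic lower bound one should take words representing elements far from the identity in each factor rather than powers of infinite-order elements, since an infinite finitely presented group is not known a priori to contain one.
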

\begin{proof}
For (1) see \cite{Gro-87}. For (2) see \cite{Gro-87} and also \cite{Ols-91,Bow-95, CooDelPap-06}. For (3) see \cite[III.$\Gamma$.1.6]{BriHae-99}. For (4) see \cite{Bric-93}.
\end{proof}

\section{Linear, quadratic and exponential Dehn functions}
\label{secLinQuadExp}

Comparing the examples of K\"ahler groups in the literature to results on Dehn functions of hyperbolic groups, non-positively curved groups and lattices, it is not hard to see that they include examples with linear, quadratic and exponential Dehn function. In this section we want to give an overview of known examples of K\"ahler groups for which we could determine their Dehn function.

Many of these examples rely on a result by Toledo.

\begin{proposition}[{\cite{Tol-90}}]
 Let $G$ be a semi-simple Lie group with associated symmetric space $X$. Assume that $X$ is an irreducible Hermitian symmetric space and that $X$ is neither the one- or two-dimensional complex unit ball, nor the Siegel upper half plane of genus $2$. Then every non-uniform lattice $\Gamma \leq G$ is a K\"ahler group.
 \label{propToledo}
\end{proposition}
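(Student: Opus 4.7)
The plan is to realise $\Gamma$ as the fundamental group of a smooth K\"ahler compactification of the locally symmetric variety $X/\Gamma$. Since $\Gamma$ is a non-uniform lattice in $G$, the quotient $X/\Gamma$ is a finite-volume, non-compact complex (orbi-)manifold. By the Baily--Borel theorem it has a canonical structure of a quasi-projective variety, and by the work of Ash--Mumford--Rapoport--Tai it admits a smooth toroidal compactification $\overline{X/\Gamma}$ whose boundary $D$ is a simple normal crossings divisor. Being smooth projective, $\overline{X/\Gamma}$ is automatically K\"ahler.

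The inclusion $X/\Gamma \hookrightarrow \overline{X/\Gamma}$ induces a surjection
\[
\Gamma = \pi_1(X/\Gamma) \twoheadrightarrow \pi_1\bigl(\overline{X/\Gamma}\bigr)
\]
whose kernel is the normal closure of the meridians around the irreducible components of $D$. Under the local description of the cusps, each such meridian is identified with the image in $\Gamma$ of a central element of the unipotent radical $N_P$ of a $\QQ$-parabolic subgroup $P \leq G$. Consequently $\overline{X/\Gamma}$ does not in general provide a K\"ahler realisation of $\Gamma$ itself, and one must modify it.

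The modification proceeds by a combination of blow-ups and ramified covers along the components of $D$: morally, one chooses the covers so that the meridian relations become trivial in $\pi_1$ of the new space, while $\Gamma$ is recovered on the nose. That this can be carried out in the smooth projective category without introducing spurious new relations relies crucially on the hypothesis that $X$ is not one of the three excluded spaces: only outside this list are the unipotent radicals $N_P$ genuinely non-abelian two-step nilpotent groups whose centres have rank at least two, leaving enough room to perform the construction. In the excluded cases the structure of $N_P$ is too constrained; in particular, for $\mathrm{PSL}(2,\RR)$ the quotient $X/\Gamma$ is homotopy equivalent to a wedge of circles so $\Gamma$ is free, which is never K\"ahler by Gromov's one-endedness constraint.

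The main obstacle is this last step: producing, for admissible $X$, a smooth projective modification $Y \to \overline{X/\Gamma}$ with $\pi_1(Y) = \Gamma$. This requires a careful local analysis near each cusp, where the boundary strata are toric varieties with well-understood fundamental groups, combined with a global van Kampen argument to glue the local cancellation of meridians into a single statement about $\pi_1(Y)$.
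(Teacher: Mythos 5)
This proposition is quoted from \cite{Tol-90}; the paper offers no proof, so the comparison is with Toledo's actual argument. Your proposal sets up the right objects (Baily--Borel, smooth toroidal compactification, the identification of the kernel of $\Gamma\twoheadrightarrow\pi_1(\overline{X/\Gamma})$ with the normal closure of central unipotent meridians), but the entire content of the theorem is concentrated in the step you defer to ``a careful local analysis'', and that step, as you describe it, cannot be carried out. Blow-ups along smooth centers do not change the fundamental group of a smooth projective variety ($\pi_1$ is a birational invariant in this category), so no birational modification $Y\to\overline{X/\Gamma}$ can have $\pi_1(Y)=\Gamma$ once the meridians have been killed. Ramified covers branched along $D$ do change $\pi_1$, but only by replacing the relation $\gamma=1$ by $\gamma^n=1$ for the infinite-order unipotent element $\gamma$; since one cannot ramify to infinite order in the projective category, $\Gamma$ is never ``recovered on the nose''. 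Worse, when $G$ has higher rank, Margulis's normal subgroup theorem forces any quotient of $\Gamma$ killing a nontrivial unipotent (or a power of one) to be finite, so every space produced by your scheme has fundamental group very far from $\Gamma$. Your characterisation of the excluded cases is also incorrect: for the ball $\mathds{B}^n$ with $n\geq 3$ (which is \emph{allowed}) the unipotent radical is the Heisenberg group $H_{2n-1}$, whose centre has rank one, so ``centre of rank at least two'' is not what distinguishes the admissible domains.

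What Toledo actually does avoids compactifying the fundamental group away in the first place. The hypothesis that $X$ is irreducible and not $\mathds{B}^1$, $\mathds{B}^2$ or the genus-$2$ Siegel space is exactly the condition that every rational boundary component of the Baily--Borel compactification $V^*$ of $V=X/\Gamma$ has complex codimension at least $3$ (for $\mathds{B}^n$ the boundary consists of points, codimension $n$; for the Siegel space of genus $g$ the largest stratum has codimension $g$; and similarly for the other classical and exceptional domains). Embedding $V^*$ projectively and intersecting with a generic linear subspace of complex dimension $2$ therefore yields a smooth compact projective surface $S$ entirely contained in the open locus $V$, and the Lefschetz hyperplane theorem for quasi-projective varieties (Hamm--L\^e, Goresky--MacPherson), applicable since $\dim_{\CC} V\geq 3$ in all admissible cases, gives $\pi_1(S)\cong\pi_1(V)\cong\Gamma$ (with a separate argument to handle torsion in $\Gamma$). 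So the key missing idea in your proposal is the codimension count on the Baily--Borel boundary combined with a generic hyperplane section, not a modification of the toroidal compactification.
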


Note that Toledo also shows that non-uniform lattices in $SU(1,1)$ and $SU(2,1)$ are not K\"ahler, while it is not known if non-uniform lattices in $Sp(4,\RR)$ are K\"ahler \cite{Tol-90}.

\subsection*{Linear Dehn function:}
By Theorem \ref{thmDehnFunctions}(1), classifying K\"ahler groups with linear Dehn function is equivalent to classifying hyperbolic K\"ahler groups. Hyperbolic K\"ahler groups include the fundamental groups $\G_g=\pi_1 S_g$ of closed orientable surfaces $S_g$ of genus $\geq 2$ and cocompact lattices $\Gamma \leq PU(n,1)$, which correspond to compact complex ball quotients $\mathds{B}^n/\Gamma$ (see \cite[Section 2]{Par-09} and also \cite{CarSte-10,Sto-15} for examples of such lattices). As an immediate consequence we obtain that there are K\"ahler groups with linear Dehn function.

\subsection*{Quadratic Dehn function:}
It is easy to obtain K\"ahler groups with quadratic Dehn function by taking direct products of hyperbolic groups and applying Theorem \ref{thmDehnFunctions}(4). In particular, we obtain that $\ZZ^{2n}$ and $\G_{g_1}\times \dots \times \G_{g_r}$ have quadratic Dehn function for $n\geq 1$, $r\geq 2$, and $g_i\geq 2$. Searching a bit further we can also find examples of K\"ahler groups with quadratic Dehn function which do not (virtually) decompose as a direct product. They include:

\begin{itemize}
 \item irreducible non-hyperbolic cocompact lattices in semi-simple Lie groups whose associated symmetric space is Hermitian, since non-compact symmetric spaces are CAT(0) and thus all cocompact lattices have Dehn function bounded above by a quadratic function by Theorem \ref{thmDehnFunctions}(3);
 \item the symplectic groups $Sp(2g,\ZZ)$ which have quadratic Dehn function  for $g\geq 5$ by a result of Cohen \cite{Coh-17}, and are known to be K\"ahler for $g\geq 3$ by Proposition \ref{propToledo};
 \item Heisenberg groups $H_{2k+1}$ for $k\geq 4$, which are K\"ahler if and only if $k\geq 4$  (\cite{Cam-95, SomVdV-86} and \cite{CarTol-95}) and have quadratic Dehn function for $k\geq 2$ \cite{All-98};
 \item the compactifications of non-arithmetic lattices constructed in Py's work \cite{Py-16}, since they are non-positively curved and non-hyperbolic (the fundamental group of the cusps maps to $\ZZ^2$-subgroups in the compactification).
\end{itemize} 

A further interesting class of K\"ahler groups with quadratic Dehn function are Hilbert modular groups defined by totally real number fields $K/Q$ of degree $n=\left[K:\QQ\right]\geq 3$. More precisely, let $\phi: K \to \RR^n$ be the homomorphism defined by the $n$ non-trivial embeddings of $K$ in $\RR$. Then $\phi$ defines an embedding $PSL(2,K)\to (PSL(2,\RR))^n$ and the image of a cocompact lattice in $PSL(2,K)$ defines a non-uniform lattice in $(PSL(2,\RR))^n$. Such a lattice is called a Hilbert modular group. It is K\"ahler by \cite{Tol-90}. Hilbert modular groups contain $\ZZ^2$ subgroups and are thus not hyperbolic. In particular, their Dehn function can not be linear. On the other hand they are $\QQ$-rank one lattices and thus by work of Drutu \cite{Dru-04} have Dehn function bounded above by $n^{2+\epsilon}$ for every $\epsilon > 0$. In fact Drutu's work shows that the Dehn function of Hilbert modular groups is quadratic (see also \cite{You-14}).

Note that very recently Leuzinger and Young showed that all irreducible non-uniform lattices in a connected center-free semisimple Lie group of $\RR$-rank $\geq 3$ without compact factors have quadratic Dehn function \cite{LeuYou-17}, confirming a Conjecture of Gromov for this very general class of lattices. Hence, any K\"ahler groups of this form also have quadratic Dehn function.

\subsection*{Exponential Dehn function:}

By work of Leuzinger and Pittet \cite{LeuPit-96}, all irreducible non-uniform lattices in semi-simple Lie groups of $\RR$-rank 2 have exponential Dehn function. Hence, to see that there are K\"ahler groups with exponential Dehn function it suffices to find an example of a K\"ahler group which can be realised by such a lattice. A class of such examples is given by non-uniform lattices in $SU(2,n)$ for $n\geq 2$, since this is a semi-simple irreducible Lie group of real rank $2$ (e.g. \cite{Hel-78}) and it follows from Proposition \ref{propToledo} that non-uniform lattices in $SU(2,n)$ are K\"ahler.

The existence of non-uniform lattices in $SU(2,n)$ is well-known. An example is the group $SU(2,n,\ZZ[i])$, which is a non-uniform lattice by Godement's compactness criterion \cite{BorHar-62, MosTam-62}, since it contains the unipotent element
\[
I_{n+2}+\left(\begin{array}{ccccccc}0 & 0 			& 0 		  & 0  & 0 & \cdots & 0          \\
			0 & \frac{1}{2} & \frac{1}{2} & -1 & \vdots &&\vdots  \\
			0 & -\frac{1}{2}& -\frac{1}{2} & 1  & &&\\
			0 & -1          & -1		  & 0 &0&&\\
			0 & \cdots 		& 			  & 0 &0 &\cdots & 0\\
			\vdots & &&&&& \vdots\\
			0 &&&\cdots &&& 0 \\
		\end{array}\right),
\]
where $I_{n+2}$ is the identity matrix in $GL(n+2,\CC)$ (for further details see \cite[Section 3]{McR-11}).

\subsection*{Other Dehn functions:}
This leaves us with the question if there are any K\"ahler groups whose Dehn function does not fall into one of these three categories. While it is possible that such groups can be found among lattices in semi-simple Lie groups whose associated symmetric space is Hermitian, we are not aware of any known K\"ahler example. Indeed, most of the lattices for which the Dehn function is known seem to fall in one of the previous three categories, which might not be too surprising in the light of the results of Leuzinger and Pittet \cite{LeuPit-96}, and Leuzinger and Young \cite{LeuYou-17}. One notable exception is the 3-Heisenberg group which has cubic Dehn function \cite[Chapter 8]{CEDHLPT-92}, \cite{Ger-92}. However, the latter is not K\"ahler.

\vspace{.2cm}
The remaining sections will be dedicated to proving that K\"ahler groups can admit a Dehn function which is not linear, quadratic or exponential, by proving upper and lower bounds on the Dehn function of a concrete example of a K\"ahler subdirect product of three surface groups constructed by Dimca, Papadima and Suciu \cite{DimPapSuc-09-II}.

\section{Distortion of fibre products of hyperbolic groups}
\label{secDistFibProd}

For short exact sequences of groups
\[
 1\rightarrow N_1 \rightarrow G_1 \stackrel{\phi_1}{\rightarrow} Q \rightarrow 1,
\]
\[
1\rightarrow N_2 \rightarrow G_2 \stackrel{\phi_2}{\rightarrow} Q \rightarrow 1,
\]
their \textit{(asymmetric) fibre product} $P\leq G_1 \times G_2$ is the group $P=\left\{ (g_1,g_2)\mid \phi_1(g_1)=\phi_2(g_2)\right\}$. 

Recall the well-known 0-1-2-Lemma.
\begin{lemma}[0-1-2 Lemma]
Consider the short exact sequences defined above. If $Q$ is finitely presented and $G_1$ and $G_2$ are finitely generated, then the fibre product $P\leq G_1 \times G_2$ is finitely generated.
\end{lemma}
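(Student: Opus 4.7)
The plan is to construct an explicit finite generating set for $P$ built from finite generating sets of $G_1$ and $G_2$, together with a finite normal generating set of $N_2$ inside $G_2$ coming from finite presentability of $Q$.

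The key preliminary is a standard fact that I would prove first: if $\psi : G \twoheadrightarrow Q$ is a surjection from a finitely generated group $G$ onto a finitely presented group $Q$, then $\ker(\psi)$ is the normal closure in $G$ of a finite subset. The argument is that any finite generating set $\psi(X)$ of $Q$ (obtained by pushing forward a finite generating set $X$ of $G$) admits a finite presentation $\langle \psi(X) \mid r_1, \dots, r_m \rangle$, and lifting each relator $r_i$ to a word $\tilde r_i$ in $X$ gives elements of $\ker(\psi)$ whose $G$-normal closure is exactly $\ker(\psi)$. Applying this to $\phi_2 : G_2 \to Q$ produces a finite subset $\{k_1, \dots, k_m\} \subset N_2$ that normally generates $N_2$ in $G_2$.

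With that in hand, fix finite generating sets $X_1 \subset G_1$ and $X_2 \subset G_2$. Using surjectivity of $\phi_1$ and $\phi_2$, for each $x \in X_1$ choose $v_x \in G_2$ with $\phi_2(v_x) = \phi_1(x)$, and for each $y \in X_2$ choose $u_y \in G_1$ with $\phi_1(u_y) = \phi_2(y)$. I claim that
\[
\Sigma \;=\; \{(x, v_x) : x \in X_1\} \;\cup\; \{(u_y, y) : y \in X_2\} \;\cup\; \{(1, k_j) : 1 \le j \le m\}
\]
generates $P$. Given $(g_1, g_2) \in P$, expand $g_1 = x_{i_1}^{\epsilon_1} \cdots x_{i_n}^{\epsilon_n}$ in $X_1$ and set $h = v_{x_{i_1}}^{\epsilon_1} \cdots v_{x_{i_n}}^{\epsilon_n} \in G_2$; the corresponding product in $\Sigma$ equals $(g_1, h) \in \langle \Sigma \rangle$. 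Then $(g_1, g_2) = (g_1, h) \cdot (1, h^{-1}g_2)$, and since $\phi_2(h) = \phi_1(g_1) = \phi_2(g_2)$, the element $h^{-1}g_2$ lies in $N_2$. Writing $h^{-1}g_2 = \prod_l w_l\, k_{j_l}^{\pm 1} w_l^{-1}$ with $w_l \in G_2$, I use the generators $(u_y, y)$ to lift each $w_l$ to an element $\tilde w_l = (\ast, w_l) \in \langle \Sigma \rangle$. Then $\tilde w_l\, (1, k_{j_l})^{\pm 1} \tilde w_l^{-1} = (1, w_l k_{j_l}^{\pm 1} w_l^{-1})$, and multiplying these yields $(1, h^{-1}g_2)$, completing the argument.

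The only real obstacle is the preliminary normal generation lemma, which is precisely where finite presentability of $Q$ (as opposed to mere finite generation) is used; without it one cannot extract a finite $\{k_1, \dots, k_m\}$ and the construction collapses. Once this is established, the rest is a bookkeeping exercise: the two sets of "diagonal" generators $(x, v_x)$ and $(u_y, y)$ allow lifting words from $G_1$ and from $G_2$ into $P$, and the kernel-type generators $(1, k_j)$ together with conjugation by these lifts recover all of $1 \times N_2$.
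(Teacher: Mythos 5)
Your proof is correct and follows essentially the same approach as the paper: an explicit finite generating set consisting of ``diagonal'' lifts of generators together with a finite set of elements normally generating one of the kernels, the latter extracted from a finite presentation of $Q$. The only difference is that you carry out the verification in full for the general asymmetric fibre product, whereas the paper only details the construction in the symmetric case $G_1=G_2$, $\phi_1=\phi_2$ (placing the kernel generators in the first factor rather than the second); this is immaterial.
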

The proof is straight-forward by constructing an explicit finite generating set. We will explain its construction in the special case of a symmetric fibre product, that is, $G_1=G_2$ and $\phi_1 = \phi_2$, since we will require it later. The general case is very similar.

Let $\mathcal{X}$ be a finite generating set for $G$. Denote by $\mathcal{X}_i$ the corresponding generating set of $G_i$ and by $\mathcal{X}_{\Delta}$ the one of the diagonal embedding $\mathrm{Diag}(G)\hookrightarrow G\times G=G_1 \times G_2$. 

Since $G$ is a quotient of the free group $F_{\mathcal{X}}$, the same is true for $Q$. For notational purposes we denote by $\mathcal{X}_Q$ the corresponding finite generating set of $Q$. Since $Q$ is finitely presented we can find a finite set of relations $\mathcal{R}_Q$ among the elements of $\mathcal{X}_Q$, such that $Q\cong \left\langle \mathcal{X}_Q\mid \mathcal{R}_Q\right\rangle$. Denote by $\mathcal{R}_1$ the lift of $\mathcal{R}_Q$ to words in $\mathcal{X}_1$ with respect to the canonical identification $\mathcal{X}_1\cong \mathcal{X}_Q$ provided by viewing $G_1$ and $Q$ as consecutive quotients of $F_{\mathcal{X}}=F_{\mathcal{X}_Q}=F_{\mathcal{X}_1}$. Since $N_1 =\ker ~  \phi_1$, it follows that $N_1 = \left\langle\left\langle \mathcal{R}_1\right\rangle\right\rangle$ is finitely generated as normal subgroup of $G_1$. It is now easy to see that a finite generating set of $P$ is given by $\mathcal{X}_{\Delta} \cup \mathcal{R}_1.$

This generating set allows us to give a lower bound for the distortion of $P$ in $G\times G$ in terms of the Dehn function $\delta_Q$ of $Q$ with respect to the presentation $Q\cong \left\langle \mathcal{X}_Q\mid \mathcal{R}_Q\right\rangle$.

\begin{proposition}\label{prop:distorsion/Dehn}
 Let $G=\left\langle \mathcal{X}\mid \mathcal{S}\right\rangle$ be a finitely presented hyperbolic group, let $Q$ be a finitely presented group and let $\phi: G\rightarrow Q$ be an epimorphism. Let $Q=\left\langle \mathcal{X}_Q \mid \mathcal{R}_Q\right\rangle$ be a finite presentation for $Q$ with $\mathcal{S}\subset \mathcal{R}$. Let $P\leq G\times G$ be the symmetric fibre product of $\phi$ and let $h_n=(g_{1,n},1)\in P$ be a sequence of elements of the intersection $P\cap \left(G_1\times \left\{1\right\}\right)$ with $|g_{1,n}|_G \asymp n$. Assume that each $g_{1,n}$ admits a representative word $v_n(X)$ with $|v_n(\mathcal{X})|_{Free(\mathcal{X})}\asymp n$ such that the null-homotopic word $v_n(\mathcal{X}_Q)$ in $Q$ satisfies $\mathrm{Area}(v_n(\mathcal{X}_Q))\asymp \delta_Q(n)$. Then $|h_n|_P\succcurlyeq \delta_Q(n)$.
 \label{propDistFibProd}
\end{proposition}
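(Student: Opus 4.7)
The plan is to take any $P$-word of length $L := |h_n|_P$ representing $h_n$, project it coordinatewise into $G$, and then transport the resulting identities to $Q$ where the hypothesis pins down the area of $v_n$. Concretely, I would write $h_n = s_1^{\epsilon_1} \cdots s_L^{\epsilon_L}$ with each $s_i \in \mathcal{X}_\Delta \cup \mathcal{R}_1$, and let $k$ be the number of diagonal letters and $m = L - k$ the number of kernel letters. The first-coordinate projection is then a word $u \in F_{\mathcal{X}}$ satisfying $u =_G g_{1,n}$ and $|u| \leq CL$, where $C := \max\{|r| : r \in \mathcal{R}_Q\}$. The second-coordinate projection kills the $\mathcal{R}_1$-letters, leaving a ``diagonal subword'' $u_D \in F_{\mathcal{X}}$ of length $k$ satisfying $u_D =_G 1$.

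With this setup I would compare two null-homotopic words in $Q$. First, $v_n u^{-1}$ is trivial in $G$ and has length at most $|v_n| + CL \leq C'(n+L)$; since $G$ is hyperbolic, Theorem \ref{thmDehnFunctions}(1) provides a van Kampen diagram over $\mathcal{S}$ with at most $K(n+L)$ cells, and because $\mathcal{S} \subset \mathcal{R}_Q$ the same diagram also fills $(v_n u^{-1})(\mathcal{X}_Q)$ over $\mathcal{R}_Q$, so $\mathrm{Area}_Q\bigl((v_n u^{-1})(\mathcal{X}_Q)\bigr) \leq K(n+L)$. Second, in the free group $F_{\mathcal{X}_Q}$ the word $u(\mathcal{X}_Q)$ differs from $u_D(\mathcal{X}_Q)$ by pushing the $m$ inserted $\mathcal{R}_Q$-relator blocks to one side; this costs at most $m \leq L$ relators, and filling the remaining $u_D(\mathcal{X}_Q)$ uses the hyperbolic filling of $u_D$ in $G$ (projected to $Q$ using $\mathcal{S}\subset \mathcal{R}_Q$) for at most $K L$ further cells. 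Hence $\mathrm{Area}_Q(u(\mathcal{X}_Q)) \leq (K+1)L$.

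To conclude, since $v_n(\mathcal{X}_Q) = (v_n u^{-1})(\mathcal{X}_Q) \cdot u(\mathcal{X}_Q)$ in $F_{\mathcal{X}_Q}$ and area is subadditive on such products of null-homotopic words, the hypothesis $\mathrm{Area}_Q(v_n(\mathcal{X}_Q)) \asymp \delta_Q(n)$ gives $\delta_Q(n) \preccurlyeq n + L$. The trivial bound $L = |h_n|_P \geq |g_{1,n}|_G \asymp n$ then absorbs the $n$-term and yields $L \succcurlyeq \delta_Q(n)$, as required. The main obstacle is the second area estimate: one must exploit the split between diagonal and kernel letters to fill $u(\mathcal{X}_Q)$ with $O(L)$ cells rather than naively invoking $\delta_Q(CL)$, and this is the step that converts length in $P$ into area in $Q$, realising the quantitative Mihailova phenomenon highlighted before the statement.
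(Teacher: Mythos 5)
Your proof is correct and follows essentially the same route as the paper: split a geodesic $P$-word into its diagonal and kernel letters, use the second coordinate to see that the diagonal part is trivial in $G$, exploit the linear Dehn function of the hyperbolic group $G$ together with $\mathcal{S}\subset\mathcal{R}_Q$ to bound $\mathrm{Area}_Q$ of both the projected word and its difference from $v_n$, and conclude by subadditivity of area. The only (cosmetic) difference is that you absorb the linear error term via $|h_n|_P\succcurlyeq n$, whereas the paper disposes of the hyperbolic-$Q$ case separately and uses $\delta_Q(n)/n\to\infty$.
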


\begin{proof}

If $Q$ is hyperbolic then the conclusion is trivially true, since we have the asymptotic inequalities $|h_n|_P \geq |g_{1,n}|_G\asymp n \succcurlyeq \delta_Q(n)$, so assume that $Q$ is not hyperbolic.

 Let $G\cong \left\langle \mathcal{X}\mid \mathcal{S}\right\rangle$ be a finite presentation for $G$. Choose a finite presentation $Q=\left\langle \mathcal{X}_Q\mid \mathcal{R}_Q\right\rangle$ as above. We denote by $\mathcal{S}_1\subset \mathcal{R}_1$ the subset corresponding to the subset $\mathcal{S}\subset \mathcal{R}$.
 
 Since $h_n\in P\cap \left(G_1\times \left\{1\right\}\right)$, the image $\phi(h_n)=\phi_1(g_{1,n})\in Q$ represents the trivial word.  There is a word $\omega_n(\mathcal{X}_{\Delta}, \mathcal{R}_1)$ with $h_n=\omega_n(\mathcal{X}_{\Delta}, \mathcal{R}_1)$ in $P$. Since $\left[G_1\times\left\{1\right\},\left\{1\right\}\times G_2\right]=\left\{1\right\}$, we obtain $\omega_n(\mathcal{X}_{\Delta},\mathcal{R}_1)= \omega_n(\mathcal{X}_1,\mathcal{R}_1)\cdot\omega_n(\mathcal{X}_2,1)$. Hence, the word $\omega_n(\mathcal{X}_2,1)$ represents the trivial element in $G_2$.
 
It follows that the word $\omega_n(\mathcal{X}_1,1)$, obtained from $\omega_n(\mathcal{X}_1,\mathcal{R}_1)$ by deleting all occurrences of elements of $\mathcal{R}_1$, represents the trivial word in $G_1$. Hence, $\omega_n(\mathcal{X}_1,1)$ is freely equal to a product of finitely many conjugates of elements of $\mathcal{S}_1$, whose number will be denoted by $k_{2,n}\geq 0$. Since hyperbolic groups have linear Dehn function, we have that for a minimal choice of $k_{2,n}$,
 \begin{equation}
 \label{eqnLength}
 l(\omega_n(\mathcal{X}_1,1))\succcurlyeq \delta_{G_1}(|\omega_n(\mathcal{X}_1,1)|_{\mathrm{Free}(\mathcal{X}_1)})\geq k_{2,n}
 \end{equation}
and thus
\[
l(\omega_n(\mathcal{X}_1,\mathcal{R}_1))= k_{1,n} + l(\omega_n(\mathcal{X}_1,1))\succcurlyeq k_{1,n}+k_{2,n},
\]
where $k_{1,n}\geq 0$ denotes the number of occurences of elements of $\mathcal{R}_1$ in $\omega_n(\mathcal{X}_1,\mathcal{R}_1)$.

However, it follows from the previous paragraph that $\omega_n(\mathcal{X}_1,\mathcal{R}_1)$ is freely equal to a product of $k_{1,n} + k_{2,n}$ conjugates of elements of $\mathcal{R}_1$. Hence, the area of $\omega_n(\mathcal{X}_1,\mathcal{R}_1)$ in $Q\cong \left \langle \mathcal{X}_1\mid \mathcal{R}_1\right\rangle$ provides us with a lower bound on $k_{1,n}+k_{2,n}$ and \eqref{eqnLength} yields
\begin{equation}
\label{eqnArea}
 l(\omega_n(\mathcal{X}_1,\mathcal{R}_1))\succcurlyeq k_{1,n}+k_{2,n}\geq \mathrm{Area}_Q(\omega_n(X_1,\mathcal{R}_1)). 
\end{equation}

The word $\omega_n(\mathcal{X}_1,\mathcal{R}_1)\cdot (v_n(X_1))^{-1}$ is null-homotopic in $G_1$. Thus, it can be written as product of $k_{3,n}\geq 0$ conjugates of elements of $\mathcal{S}_1$, where we choose $k_{3,n}$ to be minimal with this property. Since hyperbolic groups have linear Dehn function, we deduce
\begin{equation}
k_{3,n} \preccurlyeq |\omega_n(\mathcal{X}_1,\mathcal{R}_1)\cdot (v_n(\mathcal{X}_1))^{-1}|_{Free(\mathcal{X}_1)} \preccurlyeq  |\omega_n(\mathcal{X}_1,\mathcal{R}_1)|_{Free(\mathcal{X}_1)} + n.
\label{eqnk3}
\end{equation}

Using that $\delta_Q(n) \asymp \mathrm{Area}_Q(v_n(X))$, we further obtain 
\begin{equation}
 \mathrm{Area}_Q(\omega_n(\mathcal{X}_1,\mathcal{R}_1))+k_{3,n} \geq \mathrm{Area}_Q(v_n(X_1))  \asymp \delta_Q(n)
 \label{eqnk32}
\end{equation}
Combining inequalities \eqref{eqnArea}, \eqref{eqnk3} and \eqref{eqnk32}, this yields
\begin{align*}
\delta_Q(n) & \underset{\mbox{\eqref{eqnk32}}}{\preccurlyeq} \mathrm{Area}_Q(\omega_n(\mathcal{X}_1,\mathcal{R}_1))+k_{3,n}\\
 & \underset{\mbox{\eqref{eqnk3}}}{\preccurlyeq} \mathrm{Area}_Q(\omega_n(\mathcal{X}_1,\mathcal{R}_1)) + n + |\omega_n(\mathcal{X}_1,\mathcal{R}_1)|_{Free(X_1)}\\ 
 &\underset{\mbox{\eqref{eqnArea}}}{\preccurlyeq} 2\cdot l(\omega_n(\mathcal{X}_1,\mathcal{R}_1)) +n.
\end{align*}
Since $Q$ is non-hyperbolic, we obtain that $\frac{\delta_Q(n)}{n} \to \infty$ as $n\to \infty$, and thus $l(\omega_n(\mathcal{X}_1,\mathcal{R}_1))\succcurlyeq \delta_Q(n)$. It follows that $|h_n|_P \succcurlyeq \delta_Q(n)$.
\end{proof}

A special type of fibre products that we are interested in are the \textit{coabelian subgroups} of a direct product of groups, that is, subgroups $H\leq G_1\times \dots \times G_r$ with $H=\ker ~ ~ \theta$ for some epimorphism $\theta : G_1\times \dots \times G_r \rightarrow \ZZ^N$. 

For a group $G$ consider an epimorphism $\phi: G\rightarrow Q=\ZZ^N$ and denote by $\phi_1:G_1\rightarrow Q$, $\phi_2:G_2 \rightarrow Q$ two copies of this epimorphism. Then the coabelian subgroup $K=\ker ~  (\phi_1+\phi_2) \leq G_1\times G_2$ is the fibre product of the short exact sequences
\[
1\rightarrow \ker ~  \phi \rightarrow G \stackrel{\phi}{\rightarrow} Q\rightarrow 1,
\]
\[
1\rightarrow \ker ~  \phi \rightarrow G \stackrel{-\phi}{\rightarrow} Q\rightarrow 1.
\]

We obtain the following consequence of Proposition \ref{propDistFibProd}
\begin{corollary}
\label{corDistFibProd}
  Let $G$ be a finitely presented hyperbolic group, let $Q=\ZZ^N$ for some $N\geq 0$ and let $\phi: G\rightarrow Q$ be an epimorphism. Let $K$ be as defined in the previous paragraph and assume that there is an automorphism $\nu:G\rightarrow G$ such that $(\phi\circ \nu)(g)=-\phi(g)$ for all $g\in G$. 
  
  Let $h_n=(g_{1,n},1)\in K$ be a sequence of element of the intersection $K\cap \left(G_1\times \left\{1\right\}\right)$, such that $|g_{1,n}|_G=n$ and $g_{1,n}$ has the same properties as the element $g_{1,n}$ in Proposition \ref{propDistFibProd}. Then $|h_n|_K\succcurlyeq \delta_Q(n)$.
\end{corollary}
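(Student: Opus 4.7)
The plan is to reduce the statement to Proposition \ref{propDistFibProd} by using the automorphism $\nu$ to identify the asymmetric fibre product $K$ with the symmetric one $P$ for which the proposition was proved.

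Define $\Psi: G_1 \times G_2 \to G_1 \times G_2$ by $\Psi(g_1,g_2) = (g_1, \nu(g_2))$. Since $\nu$ is an automorphism of $G$, the map $\Psi$ is an automorphism of $G_1\times G_2$. I would first check that $\Psi$ sends the symmetric fibre product $P = \{(g_1,g_2) : \phi(g_1) = \phi(g_2)\}$ isomorphically onto $K = \ker(\phi_1 + \phi_2)$: indeed $(g_1,\nu(g_2)) \in K$ iff $\phi(g_1) + \phi(\nu(g_2)) = 0$ iff $\phi(g_1) = \phi(g_2)$, using $\phi \circ \nu = -\phi$. So $\Psi|_P : P \to K$ is an isomorphism of groups.

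Because $\nu$ is an automorphism of the finitely generated group $G$, it is bi-Lipschitz with respect to the word metric; consequently $\Psi$ is bi-Lipschitz on $G_1 \times G_2$ and its restriction to $P$ is a bi-Lipschitz isomorphism onto $K$ (with respect to any choice of finite generating sets). Next I would observe that the element $h_n = (g_{1,n}, 1)$ lies in $K$ and that its preimage under $\Psi$ is $(g_{1,n}, \nu^{-1}(1)) = (g_{1,n}, 1)$, which also lies in $P$ and still satisfies the hypotheses of Proposition \ref{propDistFibProd} (the conditions on $g_{1,n}$ concern only the first factor and its image in $Q$).

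Applying Proposition \ref{propDistFibProd} to this sequence inside $P$ gives $|(g_{1,n},1)|_P \succcurlyeq \delta_Q(n)$. Combining with the bi-Lipschitz equivalence between $P$ and $K$ induced by $\Psi$, we conclude $|h_n|_K \succcurlyeq \delta_Q(n)$, as desired. No step looks like a serious obstacle here; the only point requiring minor care is to choose compatible finite generating sets on $P$ and $K$ so that the bi-Lipschitz estimate for $\Psi|_P$ is transparent, but this is routine since any two finite generating sets on a finitely generated group give bi-Lipschitz equivalent word metrics.
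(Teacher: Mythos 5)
Your proposal is correct and is essentially identical to the paper's argument: the paper also identifies $K$ with the symmetric fibre product $P$ via the automorphism $(\mathrm{id}_G,\nu)$ of $G_1\times G_2$, notes that $h_n=(g_{1,n},1)$ is fixed by this identification, and then invokes Proposition \ref{propDistFibProd}. The only cosmetic difference is that you justify the metric comparison via bi-Lipschitzness of $\Psi$ on the ambient product, whereas it suffices (and is what the paper implicitly uses) that any isomorphism of finitely generated groups is bi-Lipschitz for word metrics.
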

\begin{proof}
 It is immediate from the existence of the automorphism $\nu$ that $K$ is isomorphic to the symmetric fibre product $P$ of the short exact sequence defined by $\phi: G\rightarrow Q$. The automorphism is induced by the automorphism $(\mathrm{id}_G,\nu): G_1\times G_2 \rightarrow G_1 \times G_2$ of the product. The element $h_n$ is invariant under this automorphism. Hence, Proposition \ref{propDistFibProd} implies that $|h_n|_K\asymp |h_n|_P \succcurlyeq \delta_Q(n)$.
\end{proof}

\section{Subgroups of direct products of surface groups}

\label{secLowerBound}

Throughout the remainder of the paper we will fix the convention that $\left[a,b\right]=aba^{-1}b^{-1}$. Now consider a surface group $\G_2=\pi_1 S_2$ of genus $2$ and fix a presentation
\[
\G_2 = \left\langle a_1,b_1,a_2,b_2 \mid \left[a_1,b_1\right]\left[a_2,b_2\right]\right\rangle.
\]
Define an epimorphism
\[
\begin{array}{rcl}
\phi: \G_2 &\to& \ZZ^2 = \left\langle a,b \mid \left[a,b\right]\right\rangle\\
a_i&\mapsto &a\\
b_i&\mapsto &b
\end{array}
\]
and take $r\geq 2$ copies $\phi_i : \G^{(i)}_2\to \ZZ^2$, $1\leq i \leq r$, where we denote the generators of $\G_2^{(i)}$ by $a_{j}^{(i)},~ b_{j}^{(i)}$, $j=1,2$.

As in Section \ref{secDistFibProd}, we define coabelian subgroups $K_r=\ker ~  \theta_r$ for 
\[
\theta_r = \sum_{i=1}^r \phi_i:\G^{(1)}_2\times \dots \times \G^{(r)}_2\rightarrow \ZZ^2.
\]
It is not hard to see that the groups $K_r$ for $r\geq 3$ are in fact explicit realizations of the K\"ahler subgroups of direct products of $r$ surface groups constructed by Dimca, Papadima and Suciu \cite{DimPapSuc-09-II} by taking a 2-fold branched cover of genus 2 of an elliptic curve (see \cite{Llo-16} for more details and an explicit construction of finite presentations for the $K_r$).

The rest of this section will be concerned with proving the following result:
\begin{theorem}
\label{thmMainResult}
 The Dehn function of the K\"ahler group $K_3$ satisfies $\delta_{K_3}(n)\succcurlyeq n^3$.
\end{theorem}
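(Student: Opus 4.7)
I would construct a family of null-homotopic words $w_n$ in $K_3$ of length $\asymp n$ with area $\succcurlyeq n^3$, in the spirit of Dison's analysis of subgroups of direct products of free groups \cite{Dis-09}, using Corollary \ref{corDistFibProd} as the essential input. Viewing elements of $\G_2^3$ as triples $(g_1, g_2, g_3) \in \G_2^{(1)} \times \G_2^{(2)} \times \G_2^{(3)}$, set
\[
u_1 := (a_1^{(1)}, (a_1^{(2)})^{-1}, 1), \qquad v_1 := (b_1^{(1)}, 1, (b_1^{(3)})^{-1}),
\]
\[
u_2 := ((a_1^{(1)})^{-1}, a_1^{(2)}, 1), \qquad v_2 := (1, b_1^{(2)}, (b_1^{(3)})^{-1}).
\]
Each lies in $K_3$ because the $\phi$-images of its components sum to zero in $\Z^2$, and a short computation yields $[u_1^n, v_1^n] = ([a_1^n, b_1^n]^{(1)}, 1, 1)$ and $[u_2^n, v_2^n] = (1, [a_1^n, b_1^n]^{(2)}, 1)$ in $K_3$. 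Setting $w_n := \bigl[[u_1^n, v_1^n],\, [u_2^n, v_2^n]\bigr]$, the two inner commutators have disjoint support in $\G_2^3$ and hence commute, so $w_n$ is null-homotopic in $K_3$ and has length $\asymp n$ as a word in $K_3$-generators.

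The heart of the argument is the area lower bound $\mathrm{Area}_{K_3}(w_n) \succcurlyeq n^3$, which I would establish by a two-stage amplification. First, realizing $([a_1^n, b_1^n]^{(1)}, 1, 1)$ inside $K_3$ carries a quadratic cost: by an area-analog of Proposition \ref{propDistFibProd} applied to the fibre-product structure on the first two coordinates, any expression as a product of conjugate $K_3$-relators must, on projection by $\phi$ on the first factor, produce a null-homotopic word in $\Z^2$ of area $\asymp \delta_{\Z^2}(n) = n^2$. Second, the outer commutator $w_n$ genuinely forces commutation of $[u_1^n, v_1^n]$ past $[u_2^n, v_2^n]$ in $K_3$; since the constituents do not pairwise commute in $K_3$ (for instance $[u_1, v_2] = (1, [a_1^{-1}, b_1]^{(2)}, 1) \neq 1$), one must commute each of $\asymp n$ generator pairs, and each such commutation triggers its own quadratic filling. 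Multiplying the two contributions yields $n \cdot n^2 = n^3$.

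\textbf{Main obstacle.} The hardest step is making the linear multiplicity in the second stage rigorous: one must show that the $\asymp n$ commutation slots in any van Kampen diagram for $w_n$ carry essentially independent quadratic filling costs, rather than sharing them. The natural strategy is to project a hypothetical efficient diagram for $w_n$ onto the factors of $\G_2^3$ and down to the quotient $\Z^2$, then use the hyperbolicity of $\G_2$ to control the surface-group-only subdiagrams linearly, charging the residual area to the $\Z^2$-quotient via $\delta_{\Z^2}$. This parallels Dison's argument for the Stallings--Bieri groups $SB_r$; the jump here from a quadratic to a cubic Dehn function reflects the upgrade of the quotient Dehn function from linear ($\Z$) to quadratic ($\Z^2$).
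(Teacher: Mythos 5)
Your test word is well chosen as far as it goes: $w_n$ is null-homotopic of length $\asymp n$, and the observation that $u_1$ and $v_2$ fail to commute correctly identifies the obstruction to a cheap filling. But the argument has a genuine gap exactly where you flag it, and it is not a technicality: the claim that the $\asymp n$ commutation slots carry \emph{independent} quadratic filling costs is the entire content of the theorem, and nothing in your proposal supplies it. A van Kampen diagram for $w_n$ is not obliged to commute letters past each other one at a time, and ``each commutation triggers its own quadratic filling'' is precisely the kind of assertion that fails for naive reasons (fillings can be shared, and an efficient diagram can reroute through words you have not controlled). Your first stage is also misstated: the element $([a_1^n,b_1^n]^{(1)},1,1)$ is \emph{not} expensive to realise in $K_3$ --- it equals $[u_1^n,v_1^n]$, a word of length $4n$ in your own generators, so it is undistorted in $K_3$. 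The quadratic cost only appears when this element is forced to be expressed inside the smaller subgroup $K_2$ (or $K_2\times\Z$), and your proposal never isolates such a subgroup; moreover Proposition \ref{propDistFibProd} is a distortion (word-length) estimate, not an area estimate, so there is no ``area-analog'' of it available off the shelf.

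The paper closes exactly this gap with a structural input you are missing. It splits $\G_2^{(3)}$ as an amalgam of two free groups over $\Z$ and lifts this to $K_3\cong\left(K_2\rtimes \mathrm{Free}(a_1^{(3)},b_1^{(3)})\right)\ast_H\left(K_2\rtimes \mathrm{Free}(a_2^{(3)},b_2^{(3)})\right)$ with $H\cong K_2\times\Z$. Dison's Theorem \ref{thmDison} then performs the ``linear multiplicity'' step for you: for $h\in H$ commuting with $g_1\in G_1\setminus H$ and $g_2\in G_2\setminus H$, any filling of $[w,(u_1u_2)^n]$ has area at least $C\, n\cdot\mathrm{dist}_{\mathrm{Cay}(H,\mathcal{B})}(1,h)$, because the normal form in the amalgam forces $n$ essentially disjoint bands across $H$, each carrying a copy of $h$. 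The remaining ingredient is the quadratic distortion of $h_m=\left(\left[(a_1^{(1)})^m,(b_2^{(1)})^m\right],1\right)$ in $H$, which is Lemma \ref{lemSurfnsquared} together with Lemma \ref{lemK2ZUndist}, proved via the Mihailova-type Proposition \ref{propDistFibProd} that you correctly identified as the key input. If you want to keep your direct approach, you would have to reprove something equivalent to Theorem \ref{thmDison} from scratch for your particular word, which is substantially harder than invoking it; the efficient repair is to replace your outer commutator by a word of the form $[w_m,(u_1u_2)^m]$ adapted to the amalgam decomposition, as the paper does.
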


It is straight-forward to check that
\begin{equation}
\label{eqnNuSurf}
\begin{array}{rcl}
\nu : \G_2 &\rightarrow & \G_2\\
a_i&\mapsto & (a_ib_i)a_i^{-1} (a_ib_i)^{-1}\\
b_i&\mapsto & (a_ib_i)b_i^{-1} (a_ib_i)^{-1}\\
\end{array}
\end{equation}
defines an automorphism of $\G_2$ which satisfies $\phi\circ \nu =-\phi$.

\begin{lemma}
Let $Q= \ZZ^2 = \left\langle a,b \mid \left[a,b\right]\right\rangle$ and let $h_m=\left(\left[(a_1^{(1)})^m,(b_2^{(1)})^m\right],1\right)\in K_2$, for $m\geq 1$. Then $|h_m|_{K_2}\succcurlyeq \delta_Q(m) (\asymp m^2)$.
\label{lemSurfnsquared}
\end{lemma}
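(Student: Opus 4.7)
The plan is to derive the claim as a direct application of Corollary \ref{corDistFibProd} with $G = \G_2$ (which is hyperbolic), $Q = \ZZ^2$, $\phi = \phi_1$, and $K = K_2$. The anti-equivariant automorphism $\nu : \G_2 \to \G_2$ with $\phi \circ \nu = -\phi$ required by the corollary has already been exhibited in \eqref{eqnNuSurf}, so the remaining task is to produce a representative word $v_m$ for $g_{1,m} = \left[(a_1^{(1)})^m, (b_2^{(1)})^m\right]$ meeting the hypotheses of Proposition \ref{propDistFibProd}, and to verify that $h_m$ actually lies in $K_2$.

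Membership in $K_2$ is immediate: $\phi_1$ factors through the abelianization of $\G_2^{(1)}$, so it kills commutators, giving $\theta_2(h_m) = \phi_1(g_{1,m}) + \phi_2(1) = 0$. For the representative word I would take
\[
v_m(\mathcal{X}) = (a_1^{(1)})^m \, (b_2^{(1)})^m \, (a_1^{(1)})^{-m} \, (b_2^{(1)})^{-m},
\]
which has free length exactly $4m \asymp m$. The matching lower bound $|g_{1,m}|_{\G_2} \succcurlyeq m$ follows from the standard fact that, in a torsion-free hyperbolic group, the word length of $[g^m, h^m]$ grows at least linearly in $m$ whenever $g, h$ are non-commuting hyperbolic elements.

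The key step is then to identify the image word in $Q$, namely $v_m(\mathcal{X}_Q) = a^m b^m a^{-m} b^{-m}$, and observe that its area in the presentation $\ZZ^2 = \left\langle a, b \mid [a,b] \right\rangle$ is well known to satisfy $\mathrm{Area}_Q(v_m(\mathcal{X}_Q)) \asymp m^2 \asymp \delta_{\ZZ^2}(m)$. The upper bound comes from tiling an $m \times m$ combinatorial square by relator cells, while the matching lower bound follows from a homological filling (or combinatorial Stokes) argument on van Kampen diagrams over $\ZZ^2$. With every hypothesis of Corollary \ref{corDistFibProd} verified, we conclude $|h_m|_{K_2} \succcurlyeq \delta_Q(m) \asymp m^2$. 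The main obstacle here is essentially bookkeeping: the only non-trivial input from outside the machinery of the previous section is the classical quadratic area estimate for the commutator $[a^m, b^m]$ in $\ZZ^2$; no new idea is required.
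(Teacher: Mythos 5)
Your proposal is correct and follows essentially the same route as the paper: verify $h_m\in K_2$, take $v_m=\left[(a_1^{(1)})^m,(b_2^{(1)})^m\right]$, use the classical quadratic area of $\left[a^m,b^m\right]$ in $\ZZ^2$ (the paper additionally spells out the change from the two-generator presentation of $Q$ to the induced four-generator one so that Proposition \ref{propDistFibProd} applies verbatim), and then apply Corollary \ref{corDistFibProd} with the automorphism $\nu$. The only divergence is the lower bound $|g_{1,m}|_{\G_2}\succcurlyeq m$, which the paper obtains cleanly from the retraction of $\G_2^{(1)}$ onto the free subgroup $\left\langle a_1^{(1)},b_2^{(1)}\right\rangle$, whereas you invoke a heavier (though correct) general fact about commutators of powers of independent loxodromic elements in hyperbolic groups.
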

\begin{proof}
 Since $\phi_1\left(\left[(a_1^{(1)})^m,(b_2^{(1)})^m\right] \right)=1$, we have $h_m\in K_2 \cap \left(\G_2^{(1)}\times \left\{1 \right\} \right)$. 
 
 The group $\G_2^{(1)}$ retracts onto the free subgroup $F_2=\left\langle a_1^{(1)}, b_2^{(1)} \right\rangle\leq \G_2^{(1)}$ via the homomorphism defined by
 \[
 \begin{array}{rcl}
  \G_2^{(1)}&\to & F_2\\
  a_1^ {(1)}&\mapsto& a_1^{(1)}\\
  b_2^{(1)}&\mapsto & b_2^{(1)}\\
  a_2^{(1)},b_1^{(1)}&\mapsto &1.
 \end{array}
 \]
 Hence, $F_2\leq \G_2^{(1)}$ is an undistorted subgroup and in particular we have $|g_m|_{\G_2^{(1)}}\asymp |g_m|_{F_2}=4m$, where the last inequality is realised by the sequence of words {\small{ $v_m(\mathcal{X})=\left[(a_1^{(1)})^m,(b_2^{(1)})^m\right]$}} with $|v_m(\mathcal{X})|_{\mathrm{Free}(\mathcal{X})}= 4m$, for $\mathcal{X}=\left\{a_1^{(1)},b_1^{(1)},a_2^{(1)},b_2^{(1)}\right\}$. It is well-known that $\delta_Q(n)\asymp n^2$ and that the sequence of words $w_m(\left\{a,b\right\})=\left[a^m,b^m\right]$ satisfies $\mathrm{Area}_Q(w_m(\left\{a,b\right\}))\asymp \delta_Q(m)$.
 
Replace the presentation for $Q$ with the presentation 
\[
 Q \cong \left\langle a_1^Q,b_1^Q,a_2^Q,b_2^Q\mid \left[a_1^Q,b_1^Q\right], a_1^Q(a_2^Q)^{-1}, b_1^Q(b_2^Q)^{-1}, \left[a_1^Q,b_1^Q\right]\left[a_2^Q,b_2^Q\right]\right\rangle,
\]
via the identifications $a_i^Q\mapsto a$, $b_i^Q\mapsto b$. Under this change of presentation the word $v_m(\mathcal{X}_Q)$ gets identified with the word $w_m(\left\{a,b\right\})$ under this identification. Thus, $\mathrm{Area}_Q(v_m(\mathcal{X}_Q))\asymp \delta_Q(m)\asymp m^2$.
 
It follows that the homomorphism $\phi: G\rightarrow Q$ as defined at the beginning of this section, $\nu$ as defined in \eqref{eqnNuSurf} and $h_m$ satisfy all conditions of Corollary \ref{corDistFibProd}. We obtain that $|h_m|_{K_2}\succcurlyeq \delta_Q(m) \asymp m^2$.
\end{proof}

\begin{remark}
Note that in fact the same argument works for any sequence of elements $g_m$ which is contained in an undistorted free subgroup $H\leq \G_{2}^{(2)}$, has reduced length asymptotically equivalent to $m$ and can be represented by words $v_m(\mathcal{X})$ satisfying the conditions of Proposition \ref{propDistFibProd}. Thus the conclusions of Lemma \ref{lemSurfnsquared} apply to this more general class of elements in $K_2 \cap \left( \G^{(1)}\times \left\{1\right\}\right)$.
\label{rmkSurfnsquared}
\end{remark}

The rest of the proof of Theorem \ref{thmMainResult} will require a suitable decomposition of $K_3$ to which we can apply the following consequence of \cite[Theorem 6.1]{Dis-09}.

\begin{theorem}
 Let $\Lambda = G_1 \ast_{H} G_2$ be finitely presented with $H$ a proper subgroup of each $G_i$, and $H=\left\langle \mathcal{B}\right\rangle$, $G_1=\left\langle \mathcal{A}_1\right\rangle$ and $G_2=\left \langle \mathcal{A}_2\right\rangle$, where $\mathcal{B}$, $\mathcal{A}_1$ and $\mathcal{A}_2$ are finite generating sets. Let $\mathcal{P}=\left\langle \mathcal{A}_1,\mathcal{A}_2\mid \mathcal{R}\right\rangle$ be a finite presentation for $\Lambda$. Then there is a constant $C=C(\mathcal{P},\mathcal{B})>0$ such that the following holds:
 
 Given elements $h\in H$, $g_1\in G_1 \setminus H$ and $g_2 \in G_2\setminus H$ with $\left[g_1,h\right]=\left[g_2,h\right]=1$, and words $w=w(\mathcal{A}_1)$ representing $h$ and $u_i=u_i(\mathcal{A}_i)$  representing  $g_i$, $i=1,2$, then
 \[
  \mathrm{Area}_{\mathcal{P}}(\left[w,(u_1u_2)^n\right]) \geq C \cdot n \cdot \mathrm{dist}_{\mathrm{Cay}(H,\mathcal{B})}(1,h)
 \]
 
 \label{thmDison}
\end{theorem}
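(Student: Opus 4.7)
The plan is to run a Bass--Serre corridor argument on a minimal van Kampen diagram over $\mathcal{P}$ for the loop $[w,(u_1u_2)^n]$. To set the stage, $\Lambda = G_1 \ast_H G_2$ acts on its Bass--Serre tree $T$, whose vertex stabilisers are conjugates of $G_1$ and $G_2$ and whose edge stabilisers are conjugates of $H$. Because $g_1\in G_1\setminus H$ and $g_2\in G_2\setminus H$, the product $g_1g_2$ acts hyperbolically on $T$ with translation length $2$, and $(u_1u_2)^n$ translates $2n$ along its axis $\ell$. The hypothesis $[g_1,h]=[g_2,h]=1$ both ensures that $[w,(u_1u_2)^n]$ is null-homotopic and will be used below to identify corridor labels.

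Next I would fix a minimal van Kampen diagram $D$ over $\mathcal{P}$ for $[w,(u_1u_2)^n]$ and decompose it dual to the amalgamated splitting. Clustering 2-cells according to which vertex-group relator they come from yields a natural map $D\to T$ whose preimages of edges form a disjoint family of bands, the $\mathcal{B}$-corridors, each carrying on either side a word in $\mathcal{B}$ representing an element of $H$. On the boundary, the two sub-arcs labelled $w$ and $w^{-1}$ each lie inside a single $G_1$-block at a vertex of $T$, while the two sub-arcs labelled $(u_1u_2)^{\pm n}$ trace the geodesic segment of $\ell$ of length $2n$ between these two vertices, in opposite directions.

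The main step is a counting argument. Because the two $w$-blocks sit at tree vertices that are $2n$ apart, every edge of the joining segment in $T$ separates $D$ into two pieces with one $w$-block on each side. Each such separating edge is dual to at least one $\mathcal{B}$-corridor joining the two $w$-blocks, and among the $2n$ edges of this segment one extracts at least $n$ pairwise disjoint such corridors. Crucially, by $[g_i,h]=1$, the element of $H$ carried by each of these corridors is exactly $h$, so each corridor must contain at least $\mathrm{dist}_{\mathrm{Cay}(H,\mathcal{B})}(1,h)$ edges labelled by $\mathcal{B}$, and hence, up to a uniform constant depending only on $\mathcal{P}$ and $\mathcal{B}$, at least that many 2-cells. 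Disjointness of the corridors then gives
\[
\mathrm{Area}_{\mathcal{P}}([w,(u_1u_2)^n]) \geq C\cdot n\cdot \mathrm{dist}_{\mathrm{Cay}(H,\mathcal{B})}(1,h).
\]

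The main obstacle I expect is the honest verification that the $n$ corridors can be chosen pairwise disjoint and genuinely connect the two $w$-blocks, and that each of their labels, reduced in $H$, is truly $h$ rather than some possibly shorter conjugate. The commutation hypothesis $[g_1,h]=[g_2,h]=1$ is the key leverage: it forces successive ``parallel transports'' of the corridor label along the axis of $g_1g_2$ to return $h$ rather than a conjugate, preventing the corridors from being shortened; combined with minimality of $D$, this rules out shortcuts through $H$ that would reduce the effective corridor length below $\mathrm{dist}_{\mathrm{Cay}(H,\mathcal{B})}(1,h)$. Making this bookkeeping precise -- tracking how the corridor labels change as one crosses the axis edge by edge -- is where the bulk of the technical work lies, and where the framework of \cite{Dis-09} provides the cleanest formalism.
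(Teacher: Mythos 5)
Your strategy---corridors dual to the edges of the Bass--Serre tree, with the commutation hypothesis forcing each corridor label to be $h$ itself rather than a conjugate---is essentially the mechanism behind Dison's Theorem 6.1, which is the result the paper actually invokes here. The paper does not reprove that theorem: its entire argument is the observation that Dison's statement (proved for the specific adapted presentation $\left\langle \mathcal{A}_1,\mathcal{A}_2,\mathcal{B}\mid \mathcal{R}\right\rangle$ with $C=2$) transfers to an arbitrary finite presentation $\mathcal{P}$ and an arbitrary finite generating set $\mathcal{B}$ of $H$, because $\mathrm{Area}_{\mathcal{P}}$ and $\mathrm{dist}_{\mathrm{Cay}(H,\mathcal{B})}$ only change by multiplicative constants under such changes. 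That reduction is exactly the step your write-up is missing, and without it your argument has a concrete failure point: for a general presentation $\mathcal{P}=\left\langle \mathcal{A}_1,\mathcal{A}_2\mid \mathcal{R}\right\rangle$ the relators in $\mathcal{R}$ need not be supported in a single vertex group, so ``clustering 2-cells according to which vertex-group relator they come from'' does not define a map $D\to T$, and there are no ``edges labelled by $\mathcal{B}$'' in $D$ at all since $\mathcal{B}$ is not part of the generating set of $\mathcal{P}$. The corridor structure you describe only exists after first replacing $\mathcal{P}$ by a presentation adapted to the splitting; you should make that replacement explicitly and record why the inequality survives the change of presentation and of generating set for $H$.

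Beyond that, the substantive content of the theorem---that one can extract $n$ pairwise disjoint corridors each joining the two $w$-blocks, that the element of $H$ carried by each such corridor is $h$ (here your use of $\left[g_1,h\right]=\left[g_2,h\right]=1$ to kill the conjugation by powers of $g_1g_2$ is the right idea), and that the number of 2-cells in a corridor dominates $\mathrm{dist}_{\mathrm{Cay}(H,\mathcal{B})}(1,h)$ up to a constant---is asserted rather than proved, and you acknowledge that ``the bulk of the technical work'' is deferred to the framework of Dison's paper. As it stands the proposal is therefore a plausible sketch of the \emph{cited} theorem rather than a proof of the stated one; if you intend to rely on Dison's Theorem 6.1 (as the paper does), the honest content of the proof is the change-of-presentation reduction, and that is what should be written out.
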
 
 
 Note that \cite[Theorem 6.1]{Dis-09} shows that for an explicit presentation of $\Lambda$ of the form $\left\langle \mathcal{A}_1,\mathcal{A}_2,\mathcal{B}\mid \mathcal{R}\right\rangle$ one can choose $C=2$. It is straight-forward to see that Theorem \ref{thmDison} follows from this result, using that $\mathrm{Area}_{\mathcal{P}}$ and $\mathrm{dist}_{\mathrm{Cay}(H,\mathcal{B})}$ are equivalent up to multiplicative constants under change of presentation and generating set.

Projection of the subgroup $K_3\leq \G_2^{(1)}\times \G_2^{(2)}\times \G_2^{(3)}$ onto the third factor induces a short exact sequence
\[
 1 \rightarrow K_2 \rightarrow K_3 \stackrel{p_3}{\rightarrow} \G_2^{(3)}\rightarrow 1
\]
with $K_2=\ker ~ \left(\G_2^{(1)}\times \G_2^{(2)}\rightarrow \ZZ^2\right)$. The homomorphism 
\[
(1,\nu,id_{\G_2}): \G_2^{(3)} \rightarrow K_3
\]
provides a splitting of this sequence. Hence, it follows that $K_3\cong K_2\rtimes \G_2^{(3)}$ is a semidirect product. 

Consider the decomposition $\G_2^{(3)}= \mathrm{Free}(a_1^{(3)},b_1^{(3)})\ast_{\left[a_1^{(3)},b_1^{(3)}\right]=\left[b_2^{(3)},a_2^{(3)}\right]} \mathrm{Free}(a_2^{(3)},b_2^{(3)})$ into an amalgamated product over $\ZZ$. Its lift under $p_3$ provides a decomposition of $K_3$ as amalgamated free product
\[
K_3 \cong \left(K_2 \rtimes \mathrm{Free}(a_1^{(3)},b_1^{(3)})\right)\ast_{H} \left(K_2 \rtimes \mathrm{Free}(a_2^{(3)},b_2^{(3)})\right),
\]
with 
\[
H=p_3^{-1}\left( \left\langle \left[ a_1^{(3)} ,b_1^{(3)} \right] \right\rangle \right) = K_2 \rtimes \left\langle \left[ a_1^{(3)} , b_1^{(3)} \right]\right\rangle \cong K_2 \times \left\langle \left[a_1^{(3)},b_1^{(3)} \right] \right\rangle,
\]
where the last identity follows from the fact that $\left(1,\left[\nu(a_1^{(3)}),\nu(b_1^{(3)})\right],\left[a_1^{(3)},b_1^{(3)} \right]\right)K_2 = \left(1,1,\left[a_1^{(3)},b_1^{(3)} \right]\right)K_2$ as cosets of $K_2\unlhd H$.

To simplify notation we will now write $K_2\times \ZZ$ for $K_2 \times \left\langle \left[a_1^{(3)},b_1^{(3)} \right] \right\rangle$.

\begin{lemma}
\label{lemK2ZUndist}
 Let $h\in (K_2 \times \ZZ)\cap \left(\G_2^{(1)}\times \left\{1\right\}\times \left\{1\right\}\right)$. Then $|h|_{K_2\times \ZZ}\asymp |h|_{K_2}$.
\end{lemma}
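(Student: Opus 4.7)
The plan is to prove the lemma by exploiting the basic principle that retracts are undistorted. Concretely, I would show that any $h$ in the stated intersection in fact lies in the subgroup $K_2$, and then use that the projection $K_2 \times \ZZ \to K_2$ is a retraction of the inclusion $K_2 \hookrightarrow K_2 \times \ZZ$.

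First, I would verify that $(K_2 \times \ZZ) \cap (\G_2^{(1)} \times \{1\} \times \{1\}) \subseteq K_2$. Using the identification $H \cong K_2 \times \ZZ$ discussed just above the lemma, the $\ZZ$ factor is generated by the lift of $[a_1^{(3)}, b_1^{(3)}]$, which has non-trivial image $[a_1^{(3)}, b_1^{(3)}]$ in $\G_2^{(3)}$. The third-coordinate projection $p_3: K_3 \to \G_2^{(3)}$ restricts to an injection on this $\ZZ$ factor, while it vanishes on $K_2 \leq \G_2^{(1)} \times \G_2^{(2)} \times \{1\}$. Consequently, an element of $K_2 \times \ZZ$ whose third coordinate (as an element of $K_3$) is trivial must have trivial $\ZZ$-component, i.e.\ must lie in $K_2$.

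Next, I would use that the projection $\pi: K_2 \times \ZZ \to K_2$ onto the first factor is a retraction of the inclusion $K_2 \hookrightarrow K_2 \times \ZZ$. The 0-1-2 Lemma applied to $1 \to K_2 \to \G_2^{(1)} \times \G_2^{(2)} \to \ZZ^2 \to 1$ gives $K_2$ a finite generating set $\mathcal{Y}$, and then $\mathcal{Y} \cup \{[a_1^{(3)}, b_1^{(3)}]\}$ generates $K_2 \times \ZZ$. Any word in this larger generating set representing $h \in K_2$ projects under $\pi$ to a word in $\mathcal{Y}$ of no greater length representing the same $h$, since $\pi$ kills the extra generator and fixes $\mathcal{Y}$. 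This gives $|h|_{K_2} \leq |h|_{K_2 \times \ZZ}$; the reverse inequality is immediate from the inclusion of generating sets. Passing to any other choice of finite generating sets only changes word length by multiplicative constants, yielding $|h|_{K_2} \asymp |h|_{K_2 \times \ZZ}$.

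There is no serious obstacle: the lemma is a standard instance of the fact that retracts are undistorted. The only care required is in unpacking the decomposition $H \cong K_2 \times \ZZ$ to confirm the first step, after which the rest is purely formal.
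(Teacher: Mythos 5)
Your proof is correct and takes essentially the same approach as the paper: the paper likewise splits a minimal word $\omega(\mathcal{Y},z)$ for $h$ as $\omega(\mathcal{Y},1)\omega(1,z)$ with $\omega(1,z)$ trivial in $\ZZ$, which is exactly your retraction argument, and the reverse inequality comes from the inclusion of generating sets. Your preliminary verification that $h$ actually lies in the $K_2$ factor makes explicit a point the paper uses implicitly.
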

\begin{proof}
 Let $\mathcal{Y}$ be a generating set for $K_2$ and let $z=\left(1,1,\left[a_1^{(3)},b_1^{(3)} \right] \right)$. Then $\mathcal{Y}_{+}=\mathcal{Y}\cup \left\{z\right\}$ is a generating set for $K_2\times \ZZ$. It is clear that with respect to this generating set we have $|h|_{K_2\times \ZZ}\leq |h|_{K_2}$.
 
 Conversely, let $\omega(\mathcal{Y},z)$ be a word of minimal length in $K_2\times \ZZ$ representing $h$ in $K_2\times \ZZ$. Then $\omega(\mathcal{Y},z)=\omega(\mathcal{Y},1)\omega(1,z)$ and in particular $\omega(1,z)$ represents the trivial element in $\ZZ$, while $\omega(\mathcal{Y},1)$ is a word in $\mathcal{Y}$ representing $h$.  Hence, we obtain 
 \[
 |h|_{K_2\times \ZZ} = l\left(\omega(\mathcal{Y},z)\right)=l\left(\omega(\mathcal{Y},1)\omega(1,z)\right) \geq l\left(\omega(\mathcal{Y},1)\right)\geq |h|_{K_2}.
 \]
\end{proof}

As a consequence of Lemma \ref{lemK2ZUndist} and Lemma \ref{lemSurfnsquared}, we obtain
\begin{corollary}
For $m\geq 1$, the element $h_m=\left(\left[(a_1^{(1)})^m,(b_2^{(1)})^m\right],1,1\right)\in K_2\times \ZZ$ satisfies $|h_m|_{K_2\times \ZZ}\succcurlyeq m^2$.
\label{corSurfnsquared}
\end{corollary}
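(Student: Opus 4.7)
The plan is to obtain this as a direct concatenation of the two preceding lemmas, so there is essentially no new work beyond checking that $h_m$ sits in the right subset.

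First, I would observe that $h_m=\left(\left[(a_1^{(1)})^m,(b_2^{(1)})^m\right],1,1\right)$ belongs to the intersection $(K_2 \times \ZZ)\cap \left(\G_2^{(1)}\times \left\{1\right\}\times \left\{1\right\}\right)$: the first coordinate is a commutator which lies in the kernel of $\phi_1$, so the element lies in $K_2\leq K_2\times\ZZ$, and its second and third coordinates are trivial. This is exactly the setting of Lemma~\ref{lemK2ZUndist}, so I would apply that lemma to conclude
\[
|h_m|_{K_2\times \ZZ}\;\asymp\; |h_m|_{K_2},
\]
where on the right-hand side $h_m$ is identified with the element $\left(\left[(a_1^{(1)})^m,(b_2^{(1)})^m\right],1\right)\in K_2$ of Lemma~\ref{lemSurfnsquared}.

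Second, Lemma~\ref{lemSurfnsquared} gives $|h_m|_{K_2}\succcurlyeq \delta_{\ZZ^2}(m)\asymp m^2$. Chaining the two estimates yields $|h_m|_{K_2\times \ZZ}\succcurlyeq m^2$, which is the claim. The only potential obstacle is purely notational: verifying that the embedding $K_2\hookrightarrow K_2\times\ZZ$ used in Lemma~\ref{lemK2ZUndist} is compatible with the identification of $h_m$ as a length-comparable element in both groups, but this is immediate from the construction of the generating set $\mathcal{Y}_+=\mathcal{Y}\cup\{z\}$ in the proof of Lemma~\ref{lemK2ZUndist}. No further argument is required.
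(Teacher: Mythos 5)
Your argument is exactly the paper's: Corollary \ref{corSurfnsquared} is stated there as an immediate consequence of Lemma \ref{lemK2ZUndist} and Lemma \ref{lemSurfnsquared}, combined precisely as you do. The proof is correct and requires nothing further.
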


\begin{proof}[Proof of Theorem \ref{thmMainResult}]
Let $G_1=K_2 \rtimes \mathrm{Free}(a_1^{(3)},b_1^{(3)})$ and $G_2= K_2 \rtimes \mathrm{Free}(a_2^{(3)},b_2^{(3)})$. Extend the set $\left\{\left(1,\nu(a_1^{(3)}),a_1^{(3)}\right),\left( a_1^{(1)}, 1, (a_1^{(3)})^{-1}\right),\left(b_2^{(1)},(b_2^{(2)})^{-1},1\right)\right\}\subset G_1$ to a finite generating set $\mathcal{A}_1$ of $G_1$ and the set $\left\{ \left(1,\nu(a_2^{(3)}),a_2^{(3)}\right)\right\}\subset G_2$ to a finite generating set $\mathcal{A}_2$ of $G_2$ and choose any finite generating set $\mathcal{B}$ for $H$. Let further $\mathcal{P}=\left\langle \mathcal{A}_1,\mathcal{A}_2\mid \mathcal{R}\right\rangle$ be a finite presentation for $K_3$.

  Choose elements represented by words $$g_1= u_1(\mathcal{A}_1)=\left(1,\nu(a_1^{(3)}),a_1^{(3)}\right)\in \left(K_2 \rtimes \mathrm{Free}(a_1^{(3)},b_1^{(3)})\right) \setminus H$$ and $$g_2=u_2(\mathcal{A}_2)=\left(1,\nu(a_2^{(3)}),a_2^{(3)}\right)\in \left(K_2 \rtimes \mathrm{Free}(a_2^{(3)},b_2^{(3)})\right) \setminus H.$$ Since $h_m\in K_3 \cap \G_2^{(1)}$, we have $\left[g_1,h_m\right]=\left[g_2,h_m\right]=1$. The element $h_m$ is represented by the word 
\[
 w_m(\mathcal{A}_1) = \left[ \left( a_1^{(1)}, 1, (a_1^{(3)})^{-1} \right)^m,\left(b_2^{(1)},(b_2^{(2)})^{-1},1\right)^m \right]
\]
of length $l(w_m)=4m$ with respect to the generating set $\mathcal{A}_1$.

 Thus, by Theorem \ref{thmDison} there is $C=C(\mathcal{P},\mathcal{B})>0$ such that
\[
 \mathrm{Area}_{\mathcal{P}}(\left[w_m,(u_1u_2)^m\right]) \geq C \cdot m \cdot \mathrm{dist}_{\mathrm{Cay}(H,\mathcal{B})}(1,h_m).
\] 
 
 The word $\left[w_m,(u_1u_2)^m\right]$ has length $l(\left[w_m,(u_1u_2)^m\right])= 12m$ in the alphabet $\mathcal{A}_1\cup \mathcal{A}_2$. Hence, it follows from Corollary \ref{corSurfnsquared} that
 \[
 \delta_{K_3}(m)\succcurlyeq \mathrm{Area}_{\mathcal{P}}(\left[w_m,(u_1u_2)^m\right])\succcurlyeq C m |h_m|_{K_2\times \ZZ} \succcurlyeq m^3.
 \]
 This completes the proof of Theorem \ref{thmMainResult}.
\end{proof}

Note that $K_3$ is a K\"ahler group of type $\mathcal{F}_2$, not of type $\mathcal{F}_3$ (see \cite{DimPapSuc-09-II}), which contains the subgroup $K_2$ of type $\mathcal{F}_1$, not of type $\mathcal{F}_2$. In particular, $K_3$ is not coherent. Hence, we can summarize the main properties of $K_3$ as follows.
\begin{theorem}
 The group $K_3$ is a non-coherent K\"ahler group of type $\mathcal{F}_2$, not of type $\mathcal{F}_3$. The Dehn function of $K_3$ is bounded below by $\delta_{K_3}(n)\succcurlyeq n^3$.
 \label{thmLowerBound}
\end{theorem}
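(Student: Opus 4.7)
The plan is to treat this theorem as a synthesis of facts already established or cited in the preceding sections, rather than as a substantive new result. The Dehn function lower bound $\delta_{K_3}(n) \succcurlyeq n^3$ is exactly the content of Theorem \ref{thmMainResult}, which has just been proved, so nothing further is needed for that part.

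For the K\"ahler property and the finiteness type, I would cite \cite{DimPapSuc-09-II} directly: there Dimca, Papadima and Suciu realize each $K_r$ (for $r \geq 3$) as the fundamental group of a smooth projective variety built as the fibre product, over an elliptic curve, of $r$ degree-$2$ branched covers by genus-$2$ surfaces. The same reference computes the sharp finiteness properties of $K_r$, showing in particular that $K_3$ is of type $\mathcal{F}_2$ but not of type $\mathcal{F}_3$. Both assertions of the theorem concerning the nature of $K_3$ as a K\"ahler group of prescribed finiteness type thus reduce to a citation.

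For the non-coherence statement, the key observation is that fixing the third coordinate to be trivial produces an embedding $K_2 \hookrightarrow K_3$ of $K_2 = \ker\bigl(\Gamma_2^{(1)} \times \Gamma_2^{(2)} \to \mathbb{Z}^2\bigr)$ as a subgroup of $K_3$. By the 0-1-2 Lemma recalled in Section \ref{secDistFibProd}, $K_2$ is finitely generated. On the other hand, the same result of \cite{DimPapSuc-09-II} (or a direct Bieri-Stallings-type argument for the kernel of the map $\Gamma_2 \times \Gamma_2 \to \mathbb{Z}^2$ onto the diagonal abelianization quotient) establishes that $K_2$ is of type $\mathcal{F}_1$ but not of type $\mathcal{F}_2$; in particular $K_2$ is not finitely presented. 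Since $K_3$ is itself finitely presented and contains the finitely generated but not finitely presented subgroup $K_2$, it is non-coherent by definition.

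There is no real obstacle here, since every ingredient is in place; the only bit of care needed is bookkeeping, namely verifying that the finiteness-property assertions of \cite{DimPapSuc-09-II} are applied to the correct index (type $\mathcal{F}_1 \setminus \mathcal{F}_2$ for $r=2$ and type $\mathcal{F}_2 \setminus \mathcal{F}_3$ for $r=3$) and recording explicitly the embedding $K_2 \hookrightarrow K_3$ that transports the failure of finite presentability from $K_2$ to a witness of non-coherence in $K_3$.
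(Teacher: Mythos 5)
Your proposal is correct and follows the same route as the paper: the lower bound is exactly Theorem \ref{thmMainResult}, the K\"ahler property and the type $\mathcal{F}_2$-not-$\mathcal{F}_3$ assertion are cited from \cite{DimPapSuc-09-II}, and non-coherence is deduced from the embedded subgroup $K_2$, which is finitely generated but of type $\mathcal{F}_1$ and not $\mathcal{F}_2$. The paper disposes of all of this in a one-sentence remark preceding the theorem statement; your version merely spells out the same bookkeeping.
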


As a consequence we obtain:
\begin{proof}[Proof of Corollary \ref{corHolCurv}]
 Dimca, Papadima and Suciu obtain the group $K_3$ as fundamental group of the (compact) smooth generic fibre $H$ of a surjective holomorphic map $f: S_2^{(1)}\times S_2^{(2)}\times S_2^{(3)} \to E$ for $E$ an elliptic curve. In particular, $H \subset S_2^{(1)}\times S_2^{(2)}\times S_2^{(3)}$ is an embedded complex submanifold. The manifold $S_2^{(1)}\times S_2^{(2)}\times S_2^{(3)}$ admits a metric with non-positive holomorphic bisectional curvature, since it is a product of closed Riemann surfaces.   Since holomorphic bisectional curvature is non-increasing when passing to complex submanifolds \cite[Section 4]{GolKob-67}, it follows that $H$ can be endowed with a metric of non-positive holomorphic bisectional curvature. By Theorem \ref{thmLowerBound}, $K_3$ does not admit a quadratic isoperimetric function and the result follows.
\end{proof}

Note that Corollary \ref{corHolCurv} is in contrast to the case of non-positive (Riemannian) sectional curvature, as the latter implies that the fundamental group admits a quadratic isoperimetric function.

\section{An upper bound}
\label{secUpperBound}

For a group $G$ with finite presentation $G=\left\langle X\mid R\right\rangle$, an area-radius pair $\left( \alpha, \rho\right)$ is a pair of functions $\alpha: \NN \to \RR_{>0}$ and $\rho : \NN \to \RR_{>0}$ such that for every null-homotopic word $w(X)$ with $l(w(X)) \leq n$, there is a free equality $w(X) = \prod _{j=1}^k u_i(X) r_i^{\pm 1} (u_i(X))^{-1}$ with $k\leq \alpha(n)$ and $\mathrm{max}\left\{l(u_i(X))\mid 1\leq i \leq k\right\}\leq \rho(n)$. Note that area-radius pairs are independent of the choice of presentation (up to equivalence). For further details on area-radius pairs we refer the reader to \cite[Section 3]{Dis-08}.

An upper bound on the Dehn function of $K_3$ can be obtained as a consequence of the following theorem by Dison:
\begin{theorem}[{\cite[Theorem 11.15]{Dis-08}}]
 For $r\geq 3$, let $G_1, \dots, G_r$ be finitely presented groups with area-radius pairs $\left(\alpha_i,\rho_i\right)$, for $1\leq i \leq r$, and let $A$ be an abelian group of rank $m=\mathrm{dim}\left(A\otimes_{\ZZ} \QQ\right)$. Define $\alpha(n)=\mathrm{max}\left\{ n^2, \alpha_i(n), 1\leq i \leq r\right\}$, $\rho(n)=\mathrm{max}\left\{n, \rho_i(n),1\leq i \leq r\right\}$.
 
 If $\phi: G_1 \times \dots \times G_r\to A$ is a homomorphism such that the restriction of $\phi$ to each factor $G_i$ is surjective, then $\rho^{2m}\alpha$ is an isoperimetric function for $\ker \phi$.
 \label{thmDisAreaRad}
\end{theorem}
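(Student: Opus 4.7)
The plan is to fill an arbitrary null-homotopic word $w$ of length $\leq n$ in $\ker\phi$ by first transferring the problem to the ambient product $G_1\times\cdots\times G_r$, solving it there, and then lifting the fill to a diagram over a presentation of $\ker\phi$.

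Using surjectivity of each $\phi|_{G_i}$ and the hypothesis that $A$ has rank $m$, I would fix a rational basis $a_1,\ldots,a_m$ of $A\otimes\QQ$ and preimages $g_i^{(j)}\in G_i$ with $\phi(g_i^{(j)})=a_j$. A finite generating set $\mathcal Y$ for $\ker\phi$ is then furnished by generators of each $\ker(\phi|_{G_i})$ together with ``balancing'' generators $s_{i,i+1}^{(j)}=(g_i^{(j)},(g_{i+1}^{(j)})^{-1})$ across consecutive factors, plus finitely many elements handling the torsion part of $A$.

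I would then expand $w$ into a word $\widetilde w$ of length $O(n)$ in the ambient generators of $\prod G_i$. Using the commutation relations $[G_i,G_{i'}]=1$ at total cost $O(n^2)\leq\alpha(n)$ with conjugators of length $O(n)\leq\rho(n)$, sort $\widetilde w$ into $\widetilde w_1\cdots\widetilde w_r$ with each $\widetilde w_i$ a word in $G_i$. Each $\widetilde w_i$ is null-homotopic in $G_i$ and, by the area-radius pair $(\alpha_i,\rho_i)$, can be filled with area $\leq\alpha(n)$ at radius $\leq\rho(n)$. Assembling these together produces a van Kampen diagram over $\prod G_i$ of total area $O(\alpha(n))$ and radius $O(\rho(n))$, bounding $w$.

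The heart of the argument is transferring this diagram into one over a presentation of $\ker\phi$. Each conjugator $u$ of length $\leq\rho(n)$ used in the fill traverses $\prod G_i$ and acquires an $A$-defect $\phi(u)$ of norm $O(\rho(n))$; this defect prevents us from reading $u$ directly as a word in $\mathcal Y$. I would cancel $\phi(u)$ by inserting a corrective walk built from the balancing generators $s_{i,i+1}^{(j)}$, which for each of the $m$ rational directions has length $O(\rho(n))$. Inserting such a walk along a conjugator of length $\rho(n)$ costs area $O(\rho(n)^2)$ per direction, and compounding over the $m$ directions---executed inductively on $m$, killing one rational direction at a time---yields the target factor $\rho^{2m}$. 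I expect the main obstacle to be verifying that this iterative correction procedure is coherent: preserving the planarity of the diagram and, crucially, not inflating the radius beyond $\rho(n)$ at each inductive step, so that the $\rho^2$ increase in area per direction does not also trigger a further $\rho$ increase in radius that would snowball into a worse overall bound. This is where a careful area-radius bookkeeping (rather than merely an area bound) becomes essential.
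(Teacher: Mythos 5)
First, note that the paper does not prove this statement at all: it is quoted verbatim from Dison \cite[Theorem 11.15]{Dis-08} and used as a black box, so there is no internal proof to compare against. Measured against Dison's actual argument, your outline has the right broad shape --- fill the word in the ambient product at cost $\alpha(n)$ and radius $\rho(n)$, then repair the $A$-defects of the conjugators, paying a factor of $\rho^2$ for each of the $m$ rational directions of $A$ --- and you correctly identify that the induction must carry an area--radius pair rather than a bare area bound. But the step you defer (``verifying that this iterative correction procedure is coherent'') is not a technicality; it is the entire content of the theorem, and your sketch as written cannot close it.

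The concrete symptom is that your argument never uses the hypothesis $r\geq 3$, yet the statement is false for $r=2$: the Stallings--Bieri group $\ker\left(F_2\times F_2\to\ZZ\right)$ is finitely generated but not finitely presented, so it admits no isoperimetric function whatsoever, even though both factors are free (hence have linear area--radius pairs) and the map is surjective on each factor. Everything in your sketch --- the generating set, the sorting into $\widetilde w_1\cdots\widetilde w_r$, the defect-cancelling walks --- goes through verbatim for $r=2$, so somewhere a step must fail, and it is the lifting step: before one can speak of ``a diagram over a presentation of $\ker\phi$'' one must exhibit a finite presentation, and one must then check that each corrected conjugate $u\,r\,u^{-1}$ (with $r$ a relator supported in a single factor $G_i$ and $u$ a conjugator of uncontrolled $A$-defect) decomposes into boundedly many conjugates of those finitely many relators. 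The mechanism that makes this work is that the correcting word $c$ with $\phi(c)=-\phi(u)$ must be chosen in a factor \emph{disjoint} from the support of $r$, so that $c$ commutes with $r$ and the insertion is free; when the defect must be spread across two factors $G_i,G_j$ one needs a third factor to absorb it. This is precisely where $r\geq 3$ enters (it is the same ``virtual surjection to pairs'' phenomenon governing finite presentability of such kernels), and it is absent from your proposal. Without specifying the relators of $\ker\phi$ and running this three-factor absorption explicitly, the $\rho^{2m}$ bookkeeping has nothing to attach to.
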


\begin{lemma}
 Let $G=\left\langle X \mid R\right\rangle$ be a finitely presented group and let $\delta_G$ be the Dehn function of $G$. Then there is $C>0$ such that $\left(\delta_G(n),C\cdot \delta_G(n) + n\right)$ defines an area-radius pair for $G$.
 \label{lemAreaRadius}
\end{lemma}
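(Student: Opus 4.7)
The area part of the claim is automatic: by the very definition of $\delta_G$, every null-homotopic word $w(X)$ of length at most $n$ admits a free equality $w(X) = \prod_{j=1}^k u_j(X)\, r_j^{\pm 1}\, u_j(X)^{-1}$ with $k \leq \delta_G(n)$. So the actual content of the lemma is the radius estimate: in some (minimum-area) such decomposition, the conjugators $u_j(X)$ can be chosen of length at most $C \cdot \delta_G(n) + n$.

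The plan is to reformulate the question in terms of van Kampen diagrams. Set $M := \max\{\ell(r)\mid r\in R\}$. Given a null-homotopic word $w(X)$ with $\ell(w) \leq n$, fix a van Kampen diagram $D$ for $w$ of minimum area $A = \mathrm{Area}(w) \leq \delta_G(n)$. The key step is to bound the diameter of the $1$-skeleton of $D$: each of the $A$ two-cells contributes at most $M$ to the sum $\sum_f \ell(\partial f)$, and each edge of $D$ is either a boundary edge (counted at most once in that sum and contributing at most $n$ in total) or an interior edge (counted exactly twice). This yields $E(D) \leq n + MA/2$, and since a spanning tree of the $1$-skeleton rooted at a basepoint on $\partial D$ has diameter at most $E(D)$, we obtain $\mathrm{diam}(D) \leq n + M \cdot \delta_G(n)/2$.

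To turn this into a conjugator bound, apply the standard construction used in the proof of the van Kampen lemma: fix a basepoint $\ast$ on $\partial D$, choose a spanning tree $T$ of the $1$-skeleton of $D$ rooted at $\ast$, and for each $2$-cell $f$ select a vertex $v_f \in \partial f$. Let $u_f$ be the unique $T$-path from $\ast$ to $v_f$ and $r_f$ the relator read around $\partial f$ starting at $v_f$. Using the planar, simply-connected structure of $D$, one can enumerate the $2$-cells so that $w(X) = \prod_f u_f\, r_f^{\pm 1}\, u_f^{-1}$ in $F(X)$, with exactly $A$ factors. Each conjugator $u_f$ lies in $T$, so $\ell(u_f) \leq \mathrm{diam}(T) \leq n + M \cdot \delta_G(n)/2$. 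Setting $C = M$ (any constant $\geq M/2$ works) produces the desired area-radius pair.

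The only non-cosmetic step is the edge-count yielding the diameter bound; the spanning-tree-to-conjugators translation is completely standard, and the fact that the resulting pair depends on the choice of presentation only up to the equivalence relation for area-radius pairs is built into the framework of \cite[Section 3]{Dis-08}.
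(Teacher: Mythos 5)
Your proof is correct and takes essentially the same route as the paper: the paper's argument is just the one-line assertion that a minimal-area van Kampen diagram yields conjugators of length at most $C\cdot k+l(w(X))$ with $C=\max\{l(r)\mid r\in R\}$, which is exactly the edge-count and spanning-tree bound you spell out in detail.
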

\begin{proof}
 It is not hard to see that, given a van Kampen diagram of area $k=\mathrm{Area}(w(X))$ for a word $w(X)$, then $w(X)$ is freely equal to a word of the form 
 \[
  w(X) = \prod _{j=1}^k u_j(X) r_j^{\pm 1} (u_j(X))^{-1}
 \]
with $l(u_i(X))\leq C \cdot k +l(w(X))$, where $C=\mathrm{max} \left\{ l(r(X))\mid r\in R\right\}$. 

Hence, $\left(\delta_G(n),C\delta_G(n) + n\right)$ is an area-radius pair for $G$.
\end{proof}

As an easy consequence we obtain an upper bound on the Dehn function of the group $K_3$.
\begin{corollary}
 The Dehn function of $K_3$ satisfies $\delta_{K_3}(n)\preccurlyeq n^6$.
 \label{corUpperBound}
\end{corollary}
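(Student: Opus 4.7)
The plan is to apply Theorem \ref{thmDisAreaRad} (Dison's theorem) directly with the homomorphism $\theta_3:\G_2^{(1)}\times \G_2^{(2)}\times \G_2^{(3)}\to \ZZ^2$ whose kernel is, by definition, $K_3$. This requires three inputs: that each restriction $\theta_3|_{\G_2^{(i)}}=\phi_i$ is surjective, that we identify the rank $m$ of the abelian target, and that we produce suitable area-radius pairs for each factor $G_i = \G_2^{(i)}$.

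First, I would observe that each $\phi_i:\G_2^{(i)}\to \ZZ^2$ is surjective by construction (it sends both $a$-generators to $a$ and both $b$-generators to $b$), so the hypothesis of Theorem \ref{thmDisAreaRad} on the restrictions is satisfied. The target is $A=\ZZ^2$, which has rank $m=2$. Next, since $\G_2$ is the fundamental group of a closed orientable surface of genus $2$, it is hyperbolic, and hence by Theorem \ref{thmDehnFunctions}(1) its Dehn function is linear, i.e.\ $\delta_{\G_2}(n)\asymp n$. Applying Lemma \ref{lemAreaRadius} to $\G_2$ therefore produces an area-radius pair $(\alpha_i,\rho_i)$ with $\alpha_i(n)\asymp n$ and $\rho_i(n)\asymp n$.

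Feeding these into the notation of Theorem \ref{thmDisAreaRad}, the aggregate functions become
\[
\alpha(n)=\max\{n^2,\alpha_1(n),\alpha_2(n),\alpha_3(n)\}\asymp n^2,\qquad \rho(n)=\max\{n,\rho_1(n),\rho_2(n),\rho_3(n)\}\asymp n.
\]
Since $m=2$, the conclusion of Theorem \ref{thmDisAreaRad} gives the isoperimetric function
\[
\rho(n)^{2m}\alpha(n)\asymp n^{4}\cdot n^{2}=n^{6}
\]
for $K_3=\ker \theta_3$, which is exactly the asymptotic bound $\delta_{K_3}(n)\preccurlyeq n^6$ claimed in the corollary.

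There is no real obstacle here: once the correct interpretation of the set-up is in place, the argument is a bookkeeping exercise applying the cited results. The only point that might deserve a sentence of verification is that the hypothesis "$\phi$ restricted to each factor $G_i$ is surjective" is indeed met for $\theta_3$, which is immediate from the definitions of the $\phi_i$. Combining Corollary \ref{corUpperBound} with Theorem \ref{thmLowerBound} then yields Theorem \ref{thmMainTheorem}.
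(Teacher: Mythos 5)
Your proof is correct and follows exactly the paper's argument: apply Dison's theorem (Theorem \ref{thmDisAreaRad}) to $\theta_3$ with $m=2$, using the linear Dehn functions of the hyperbolic surface group factors together with Lemma \ref{lemAreaRadius} to get area-radius pairs $\alpha_i(n)\asymp\rho_i(n)\asymp n$, so that $\rho^{2m}\alpha\asymp n^4\cdot n^2=n^6$. No differences worth noting.
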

\begin{proof}
 The group $K_3$ is the kernel of the homomorphism $\theta_3: \Gamma_2^{(1)}\times \Gamma_2^{(2)}\times \Gamma_2^{(3)} \to \ZZ^2$ which is surjective on factors. Since surface groups are hyperbolic the groups $\Gamma_2^{(i)}$ have linear Dehn function $\delta_{G_2^{(i)}}(n) = n$. By Lemma \ref{lemAreaRadius} they admit area-radius pairs $\left(\alpha_i,\rho_i\right)$ with $\alpha_i(n)\asymp\rho_i(n)\asymp \delta_{\Gamma_2^{(i)}}(n)\asymp n$. Theorem \ref{thmDisAreaRad} implies that $\delta_{K_3}(n)\preccurlyeq \rho^{4} \cdot \alpha \asymp n^6$.
\end{proof}

Note that Theorem \ref{thmDisAreaRad} was used in a very similar way in \cite[Corollary 12.6]{Dis-08} to show that subdirect products of $r$ limit groups of type $\mathcal{F}_{r-1}$ have polynomial Dehn function and in \cite[Proposition 13.3(2)]{Dis-08} to show that the examples in \cite{Dis-09} have Dehn function bounded above by $n^6$.
\begin{proof}[Proof of Theorem \ref{thmMainTheorem}]
This is now an immediate consequence of Theorem \ref{thmLowerBound} and Corollary \ref{corUpperBound}.
\end{proof}

\section{Questions}
\label{secQuestions}

This work begs intriguing questions and we want to finish by listing some of them. One may first ask whether our result can be extended as follows.
\begin{question}
For every integer $k$, is there $k'\geq k$ and a K\"ahler group $G$ with Dehn function satisfying $n^k\preccurlyeq \delta_G(n) \preccurlyeq n^{k'}$?
\end{question}

Note that the same reasoning as in \cite[Proposition 13.3(3)]{Dis-08} and Section \ref{secUpperBound} can be applied to see that the groups constructed in \cite{DimPapSuc-09-II}, \cite{BisMjPan-14} and \cite{Llo-16-II} must all have Dehn function bounded above by $n^6$. Hence, these examples can not be used to obtain a positive answer to this question. 
However, these arguments do not apply to many of the groups constructed very recently by the first author from maps onto higher-dimensional tori \cite{Llo-17}. Thus, in the light of Theorem \ref{thmMainTheorem}, one could try to search for groups with larger Dehn function among these examples.

Recall that the gap between $1$ and $2$ in Theorem \ref{thmDehnFunctions}(2) is the only gap in the isoperimetric spectrum, that is, for every $\alpha\in \left[2,\infty\right)$ and $\epsilon>0$ there is a group $G$ with Dehn function $n^{\beta}$ and $|\alpha-\beta|<\epsilon$ \cite{BraBri-00}. It is thus natural to ask if the same is true in the class of K\"ahler groups.
\begin{question}
 What is the isoperimetric spectrum of Dehn functions of K\"ahler groups? Does it contain any gaps on $\left[2,\infty\right)$?
\end{question}
We do not know the exact asymptotic of the Dehn function of the group of 
Theorem \ref{thmMainTheorem}, and in particular we do not know whether there exists a single point in the  isoperimetric spectrum of Dehn functions of K\"ahler groups that is different from $1$ or $2$.

We might wonder whether there exist K\"ahler groups with arbitrary large Dehn function. 
For instance
\begin{question}
Do all K\"ahler groups have solvable word problem? 
\end{question}
Recall that the word problem is solvable if and only if the Dehn function is a recursive function.
At this time even the following question is open. 
\begin{question}
Do all K\"ahler groups admit an exponential isoperimetric function?
\end{question}

Finally, an interesting though mysterious class of K\"ahler groups are those which are nilpotent. The only non-trivial known examples are the  higher dimensional Heisenberg groups mentioned in Section \ref{secLinQuadExp} (which have quadratic Dehn function). 
\begin{question}
Do all nilpotent K\"ahler groups  admit a quadratic Dehn function?
\end{question}

More generally, we do not know whether nilpotent groups whose Malcev algebras are quadratically presented admit a quadratic Dehn function, although in the introduction of the first arXiv version of his article \cite{You-06}, Young suggests a strong link between these two conditions (see also \cite[Section 6.2]{Rum-05}).

\bibliography{References}
\bibliographystyle{plain}

\end{document}